\newtheorem{thm}{Theorem}
\newtheorem{lem}[thm]{Lemma}
\newtheorem{cor}[thm]{Corollary}
\newtheorem{prop}[thm]{Proposition}
\begin{document}

%%%%%%%%%%%%%%%%%%%%%%%%%%%%%%%%%%%%%%%%%%%%%%%%%%%%%%%%%%%%%%%%%%%%%%%%

\title{Covering rectangles by few monotonous polyominoes}
\author{Christian Richter}
\address{Institute for Mathematics, Friedrich Schiller University, 07737 Jena, Germany}
\email{christian.richter@uni-jena.de}
\date{\today}

\begin{abstract}
A monotonous polyomino is formed by all lattice unit squares met by the graph of some fixed monotonous continuous function $f:[a,b] \to \mathbb{R}$ with $f(k) \notin \mathbb{Z}$ whenever $k \in \mathbb{Z}$. Our main result says that the least cardinality of a covering of a lattice $(m \times n)$-rectangle by monotonous polyominoes is $\left\lceil \frac{2}{3}\left(m+n-\sqrt{m^2+n^2-mn}\right)\right\rceil$.
The paper is motivated by a problem on arrangements of straight lines on chessboards.
\end{abstract}

\subjclass[2010]{52C15 (primary); 05B40; 05B50; 52B20 (secondary).}
\keywords{Polyomino; ribbon tile; rim hook; lattice rectangle; covering; minimal cardinality}

\maketitle

%%%%%%%%%%%%%%%%%%%%%%%%%%%%%%%%%%%%%%%%%%%%%%%%%%%%%%%%%%%%%%%%%%%%%%

\section{Introduction}

In \cite{barany_frankl} B\'ar\'any and Frankl pose the following question. 
\emph{What is the minimal number $p_n$ such that there exist $p_n$ straight lines that \emph{pierce} an $(n \times n)$-chessboard in the sense that each of its $n^2$ cells is met in its interior by at least one line?} Together with Ambrus and Varga they show in \cite{ambrus+} that $p_n > \frac{7}{10}n$ for sufficiently large $n$ and that $p_n \le n-1$ for $n \ge 3$, and they conjecture that $p_n=n-1$ for $n \ge 3$.

One can assume that none of the straight lines of the above piercing meets a vertex of a cell, since otherwise some sufficiently small shift of such a line would strictly enlarge the set of all cells met by it in the interior. Then each line gives rise to a polyomino on the chessboard, and the above problem amounts to a problem of optimal covers by polyominoes.
Recall that a \emph{polyomino} is a finite union of \emph{lattice unit squares} or \emph{lattice cells} (these are squares of the form $S{(k,l)}:=[k-1,k] \times [l-1,l]$, $k,l \in \mathbb{Z}$) with connected interior \cite[p.\ 3]{golomb}. Polyominoes give rise to many combinatorial problems, in particular to problems of tilings, see e.g.\ \cite{golomb,martin} and \cite[Section 9.4]{gruenbaum_shephard}.

The polyominoes in the above piercing problem satisfy a monotonicity condition depending on the slope of the underlying straight lines. We call a polyomino \emph{monotonically increasing} if it is composed of a finite sequence of lattice cells $S_1,\ldots,S_k$ such that $S_{j+1}$ is either the right-hand or the above neighbour of $S_j$, $j=1,\ldots,k-1$. In an analogous way we define \emph{monotonically decreasing polyominoes}.
Accordingly, a monotonous polyomino is formed by all lattice unit squares met by the graph of some fixed monotonous continuous function $f:[a,b] \to \mathbb{R}$ with $f(k) \notin \mathbb{Z}$ whenever $k \in \mathbb{Z}$. 
Monotonically increasing polyominoes appear in the literature under the names \emph{rim hooks}, \emph{skew hooks} or \emph{ribbon tiles} and have applications in algebraic combinatorics, see e.g.\ \cite{bertram+,james+,lascoux+,pak,sheffield,stanton_white}.

Here we study the following relative of the problem by B\'ar\'any and Frankl. \emph{What is the minimal number $\tilde{p}_n$ such that an $(n \times n)$-chessboard can be covered by $\tilde{p}_n$ monotonous polyominoes?} Clearly, $p_n \ge \tilde{p}_n$. We will see that $\tilde{p}_n=\left\lceil \frac{2}{3} n \right\rceil$ (see Corollary~\ref{cor:squares}), which is close to the above mentioned lower bound $p_n > \frac{7}{10}n$.

In fact, we will cover not only chessboards, but arbitrary \emph{lattice $(m \times n)$-rectangles}. These are integer translates of the standard rectangle $R_{m \times n}:=[0,m] \times [0,n]$ of height $n \in \{1,2,\ldots\}$ and length (or width) $m \in \{1,2,\ldots\}$. For formal reasons we define $R_{m \times n}:=\emptyset$ for $m,n \in \mathbb{N}:=\{0,1,2,\ldots\}$ with $\min\{m,n\}=0$.

Let us remark that the minimal number of monotonically increasing polyominoes for covering $R_{m \times n}$ is $\min\{m,n\}$: Indeed, $\min\{m,n\}$ stripes of size $\max\{m,n\} \times 1$ (or $1 \times \max\{m,n\}$, respectively) suffice to cover $R_{m \times n}$. A cover by less than $\min\{m,n\}$ tiles is impossible, since each monotonically increasing polyomino covers at most one of the cells $S(j,\min\{m,n\}+1-j) \subseteq R_{m \times n}$, $j=1,\ldots,\min\{m,n\}$.

%%%%%%%%%%%%%%%%%%%%%%%%%%%%%%%%%%%%%%%%%%%%%%%%%%%%%%%%%%%%%%%%%%%%%%%

\section{Main result}

Our main goal is the characterization of the numbers
\[
p(m,n):=\min\{p \in \mathbb{N}: \text{the rectangle } R_{m \times n}\text{ can be covered by } p \text{ monotonous polyominoes}\}
\]
for all $m,n \in \mathbb{N}$. 
Clearly, $p(m,n)=0$ if $\min\{m,n\}=0$. 
It turns out to be useful to start with the computation of the related quantities
\[
m(n,p):=\sup\{m \in \mathbb{N}: \text{the rectangle } R_{m \times n}\text{ can be covered by } p \text{ monotonous polyominoes}\}
\]
for $n,p \in \mathbb{N}$. Coverings of $R_{m \times n}$ by the $n$ horizontal polyominoes $[0,m] \times [j-1,j]$, $j=1,\ldots,n$, yield $m(n,p)=\infty$ for $p \ge n$. In the next section we shall prove the following.

\begin{thm} \label{thm:main}
{\bf (a) } For all $n,p \in \mathbb N$ with $n > p$,
$$
m(n,p)=p+\left\lfloor \frac{p^2}{4(n-p)} \right\rfloor.
$$

{\bf (b) } For all $m,n \in \mathbb N$, 
$$
p(m,n)= \left\lceil \frac{2}{3}\left(m+n-\sqrt{m^2+n^2-mn}\right)\right\rceil.
$$
\end{thm}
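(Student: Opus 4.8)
The plan is to prove part (a) in full and then obtain part (b) from it by a purely arithmetic inversion. Throughout I write $s(i,j)=i+j$ and $d(i,j)=i-j$ for a cell $S(i,j)$, and I set $A_c=\{S(i,j):i+j=c\}$, $B_e=\{S(i,j):i-j=e\}$ for the anti-diagonals and diagonals of $R_{m\times n}$. The point of these coordinates is structural: a monotonically increasing polyomino meets each anti-diagonal $A_c$ at most once (its cells lie on consecutive anti-diagonals, one per cell), while a monotonically decreasing polyomino meets each diagonal $B_e$ at most once. Equivalently, an increasing (resp.\ decreasing) polyomino spanning $w$ columns and $h$ rows has exactly $w+h-1$ cells. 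These facts are the engine of every estimate below.

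To reduce (b) to (a) I would first record that coverability of $R_{m\times n}$ by $p$ polyominoes is monotone in $m$: deleting the last column turns each polyomino into a prefix of its staircase (the cells in column $m$ form a suffix of the cell sequence, since $i$ is monotone along any monotonous polyomino), which is again a monotonous polyomino, so $R_{(m-1)\times n}$ is covered as well. Hence $R_{m\times n}$ is coverable by $p$ iff $m\le m(n,p)$, and $p(m,n)=\min\{p:m\le m(n,p)\}$. Because $m$ and $p$ are integers, $m\le p+\lfloor p^2/(4(n-p))\rfloor$ is equivalent to $m\le p+p^2/(4(n-p))$, i.e.\ to $3p^2-4(m+n)p+4mn\le 0$; this quadratic in $p$ is nonpositive exactly between its roots $\tfrac23\bigl(m+n\pm\sqrt{m^2+n^2-mn}\bigr)$, so the least admissible integer is $\lceil\tfrac23(m+n-\sqrt{m^2+n^2-mn})\rceil$. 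The edge cases ($\min\{m,n\}=0$, or the smaller root falling outside the range $p<n$) I would dispatch separately, using the symmetry $p(m,n)=p(n,m)$.

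For the lower bound in (a) I would exhibit an explicit covering with a balanced split into $p_+=\lfloor p/2\rfloor$ increasing and $p_-=\lceil p/2\rceil$ decreasing polyominoes. In the $(s,d)$-picture the rectangle becomes a tilted rectangle, increasing polyominoes become lattice paths monotone in $s$ and decreasing ones paths monotone in $d$; one then weaves $p_+$ of the former with $p_-$ of the latter so that consecutive staircases of the same type abut along a common seam and tile everything except for the $\approx p_+p_-$ small triangular overlaps where opposite families cross. Counting the filled columns gives length $p+\lfloor p_+p_-/(n-p)\rfloor$, and since $p_+p_-\le\lfloor p^2/4\rfloor$ with equality at the balanced split (the nested floors collapsing to $\lfloor p^2/(4(n-p))\rfloor$), this reaches the claimed value.

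The upper bound is where the difficulty lies. The naive estimates do not suffice: both the area bound $mn\le p(m+n-1)$ and the two-family double count $\sum_c(|A_c|-p_+)^+\le\sum_e\min(|B_e|,p_-)$ remain satisfied strictly beyond the true threshold, because they ignore the \emph{crossings} forced between increasing and decreasing staircases. The prototype failure is $R_{4\times3}$ with $p=2$: a full increasing path from $(1,1)$ to $(4,3)$ and a full decreasing path from $(1,3)$ to $(4,1)$ must share a cell, so they cannot tile $12$ cells as $6+6$, yet the double count permits it. My plan is therefore to make the overlap bookkeeping exact, writing $mn=\sum_P|P|-(\text{overlaps})$ and forcing the overlap total to dominate the crossing number of the $p_+p_-$ opposite pairs. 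Concretely I would use the refined cost estimate that a decreasing polyomino covering $t$ cells of a single anti-diagonal must occupy at least $2t-1$ consecutive diagonals, so that covering the $m-n+1$ long central anti-diagonals (each needing $\ge n-p_+$ decreasing cells) cannot be done without the overlap growing like $p_+p_-$ relative to the slack $n-p$. The main obstacle will be proving that the \emph{most efficient} covering is exactly the balanced periodic weave of the construction — an extremal/convexity statement isolating the product $p_+p_-$ and the factor $1/(n-p)$ — after which maximizing over the split $p_++p_-=p$ yields $m\le p+\lfloor p^2/(4(n-p))\rfloor$ and closes the gap.
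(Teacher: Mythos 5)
Your global architecture coincides with the paper's --- prove (a), then invert the quadratic to get (b); the reduction, including the integrality step $m\le p+\left\lfloor p^2/(4(n-p))\right\rfloor \Leftrightarrow m\le p+p^2/(4(n-p))$ and the monotonicity of coverability in $m$, is exactly the paper's argument and is sound. But both halves of (a) have genuine gaps, and for the upper bound you name the missing idea yourself. Your plan requires showing that the most efficient covering ``is exactly the balanced periodic weave,'' an extremal-structure statement that is not only unproved but false as stated: optimal coverings are far from unique (when $id<n-p$ \emph{every} split $i+d=p$ attains $m(n,i,d)=p$), and no such structure theorem is needed. The paper's route is different and much weaker in what it demands. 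First, a within-family surgery (Lemma~\ref{lem:upper_row} through Corollary~\ref{cor:9}) replaces the $i$ increasing tiles by at most $i$ \emph{pairwise non-overlapping} increasing tiles, each defined on all of $\{0,\ldots,m\}$, still covering the originals; the key device is the min/max merge $L[I,J]$, $U[I,J]$ of Lemma~\ref{lem:8}. Second, and decisively, it exploits \emph{maximality} of $m=m(n,i,d)$ (Lemma~\ref{lem:11}): if some full-width increasing tile $I$ and decreasing tile $D$ had $I(0)\ge D(0)$, both could be extended into a prepended column, yielding an $(i,d)$-covering of a wider rectangle and contradicting maximality; hence every increasing tile starts below and ends above every decreasing tile, which forces at least $i'd'$ doubly covered cells. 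Combined with $\sum_j \# I_j\le i'(m+n-i')$ (non-overlapping tiles have distinct start heights and distinct end heights), this yields $mn+i'd'\le i'(m+n-i')+d'(m+n-d')$, and the bound follows by algebra. Your diagonal/anti-diagonal estimates are correct as far as they go (a decreasing tile does meet each diagonal exactly once), but they are stated for \emph{arbitrary} coverings, where the crossing count $i'd'$ is simply false (small tiles in disjoint regions cross nothing); without the non-overlap normalization and the extend-by-a-column maximality trick you have no mechanism to force the $p_+p_-$ overlap term.

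The lower bound is likewise only a sketch: ``weaving'' with ``$\approx p_+p_-$ small triangular overlaps'' and ``counting the filled columns gives length $p+\lfloor p_+p_-/(n-p)\rfloor$'' asserts precisely what must be proved, floors included. The paper obtains it by a genuinely delicate double induction: Lemma~\ref{lem:tilde_m} gives the recursion $\tilde{m}(e,i,d)\ge d+e+\tilde{m}(e,i-e,d)$ for $i\ge e$ (spending $d+e$ columns to reroute $e$ increasing tiles over the top of the rectangle), and the induction on $e$ writes $i=\alpha e+\iota$, $d=\beta e+\delta$, applies the recursion $\alpha+\beta$ times, handles the remainder rectangle via the induction hypothesis at the \emph{smaller} excess $\lfloor\iota\delta/e\rfloor<e$ together with a reflection in $y=-x$ that trades width for height, and only then does the floor identity $\left\lfloor \alpha\beta e+\alpha\delta+\beta\iota+\iota\delta/e\right\rfloor=\left\lfloor id/e\right\rfloor$ come out exactly; for odd $p$ one further needs the parity observation of Lemma~\ref{lem:parity} that $\left\lfloor (p^2-1)/(4(n-p))\right\rfloor=\left\lfloor p^2/(4(n-p))\right\rfloor$. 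None of this is visible in your sketch, and no explicit weave of the exact claimed width is exhibited. In short: your reduction of (b) to (a) is correct, your crossing intuition and the $R_{4\times 3}$, $p=2$ counterexample to naive counting identify the right phenomenon, but the construction is unsubstantiated and the upper bound is missing its two key ideas (within-family disentanglement and the maximality/extension argument), replaced by an extremal-uniqueness claim that would not hold.
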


We stress two extremal cases. The first one concerns rectangles which are so flat that already their trivial covering by $n$ horizontal polyominoes is of minimal cardinality.

\begin{cor}
Let $m,n \in
\mathbb{N} \setminus \{0\}$ be such that $m \ge n$. Then the trivial covering of the rectangle $R_{m \times n}$ by $n$ parallel $(m \times 1)$-rectangles is of minimal cardinality among all coverings of $R_{m \times n}$ by monotonous polyominoes if and only if $m > \frac{(n+1)^2}{4}-1$ or, equivalently, if $n < 2\sqrt{m+1}-1$.
\end{cor}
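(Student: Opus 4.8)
The plan is to translate the statement into a threshold condition and then invoke Theorem~\ref{thm:main}(a). Since each horizontal strip $[0,m]\times[j-1,j]$ is itself a monotonous polyomino, the trivial covering of $R_{m\times n}$ by these $n$ strips is legitimate, so $p(m,n)\le n$. Hence this covering is of minimal cardinality precisely when $p(m,n)=n$, that is, when $R_{m\times n}$ admits \emph{no} covering by $n-1$ monotonous polyominoes. Restricting any covering to a left sub-rectangle $[0,m']\times[0,n]$ with $m'\le m$ again yields a covering by monotonous polyominoes (the cells of a monotonous polyomino lying in the first $m'$ columns form a prefix of its defining cell sequence, hence a monotonous polyomino), so the set of $m$ for which $R_{m\times n}$ is coverable by a fixed number of tiles is a down-set. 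Coverability by $n-1$ tiles is therefore equivalent to $m\le m(n,n-1)$, and the trivial covering is minimal if and only if $m>m(n,n-1)$.

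Next I would evaluate this threshold. Since $n>n-1$, Theorem~\ref{thm:main}(a) applies with $p=n-1$ and gives
\[
m(n,n-1)=(n-1)+\left\lfloor\frac{(n-1)^2}{4}\right\rfloor .
\]
The crucial algebraic observation is the identity
\[
(n-1)+\frac{(n-1)^2}{4}=\frac{(n-1)(n+3)}{4}=\frac{(n+1)^2}{4}-1 ,
\]
so that $m(n,n-1)$ differs from $\frac{(n+1)^2}{4}-1$ only through the floor. It then remains to check that, for integer $m$, the inequality $m>m(n,n-1)$ coincides exactly with $m>\frac{(n+1)^2}{4}-1$. I would verify this by splitting on the parity of $n$: for odd $n$ the quantity $\frac{(n-1)^2}{4}$ is already an integer, both thresholds agree, and the equivalence is immediate; for even $n$ one has $\frac{(n-1)^2}{4}=\frac{(n-1)^2-1}{4}+\frac14$, the floor discards the extra $\frac14$, and a short computation shows that both strict integer inequalities reduce to $m\ge\frac{(n+1)^2-1}{4}$. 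This parity bookkeeping around the floor is the only genuinely delicate point of the argument; everything else is formal.

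Finally, to obtain the equivalent form I would simply rearrange: $m>\frac{(n+1)^2}{4}-1$ is the same as $4(m+1)>(n+1)^2$, and since $m+1>0$ and $n+1>0$ taking square roots gives $2\sqrt{m+1}>n+1$, i.e.\ $n<2\sqrt{m+1}-1$. As a sanity check one can record the boundary case $n=1$, where Theorem~\ref{thm:main}(a) is used with $p=0$, giving $m(1,0)=0$ and the condition $m>0$, in agreement with $\frac{(n+1)^2}{4}-1=0$.
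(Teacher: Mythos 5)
Your proof is correct, but it takes a genuinely different route from the paper's. The paper derives the corollary in two lines from Theorem~\ref{thm:main}(b): since $p(m,n)\le n$ always, minimality of the trivial covering means $p(m,n)>n-1$, which by the ceiling formula is equivalent to $\frac{2}{3}\left(m+n-\sqrt{m^2+n^2-mn}\right)>n-1$, and rearranging and squaring this inequality yields exactly $m>\frac{(n+1)^2}{4}-1$. You instead invoke Theorem~\ref{thm:main}(a), reducing minimality to $m>m(n,n-1)=(n-1)+\left\lfloor\frac{(n-1)^2}{4}\right\rfloor$, which obliges you to do two things the paper's route avoids: (i) justify that coverability by a fixed number of tiles is monotone in $m$ --- your prefix/down-set argument is sound, and indeed the paper uses this monotonicity tacitly in \eqref{eq:pf1b_1} without spelling it out; and (ii) carry out the parity bookkeeping around the floor, which checks out: the identity $(n-1)+\frac{(n-1)^2}{4}=\frac{(n+1)^2}{4}-1$ makes the two thresholds coincide for odd $n$, while for even $n$ both strict integer inequalities reduce to $m\ge\frac{(n+1)^2-1}{4}$. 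What the paper's route buys is brevity (no case split, no monotonicity argument) at the cost of manipulating the square-root expression; your route stays entirely in integer arithmetic, uses only part (a) of the main theorem, and makes explicit a monotonicity fact the paper leaves implicit. The final equivalence with $n<2\sqrt{m+1}-1$ is the same elementary rearrangement in both treatments, and your sanity check at $n=1$ is consistent with the formula.
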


\begin{proof}
We have to characterize the situation $p(m,n)=n$. Since always $p(m,n) \le n$, this is equivalent to $p(m,n) > n-1$, which is in turn equivalent to $\frac{2}{3}\left(m+n-\sqrt{m^2+n^2-mn}\right)> n-1$ by Theorem~\ref{thm:main}(b). This yields the claim.
\end{proof}

The second extremal case concerns squares, the least flat rectangles.

\begin{cor}\label{cor:squares}
$p(m,m)=\left\lceil \frac{2}{3}m\right\rceil$ for all $m \in \mathbb N$.
\end{cor}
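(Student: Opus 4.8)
The plan is to obtain Corollary~\ref{cor:squares} as a direct specialization of Theorem~\ref{thm:main}(b) to the symmetric case $n=m$. All of the combinatorial work has already been carried out in establishing the general formula for $p(m,n)$, so the only task remaining is to evaluate the expression inside the ceiling function when the two side lengths agree. Concretely, I would set $n=m$ in
\[
p(m,n)=\left\lceil \tfrac{2}{3}\left(m+n-\sqrt{m^2+n^2-mn}\right)\right\rceil
\]
and simplify.

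The key step is the collapse of the radicand: substituting $n=m$ gives $m^2+n^2-mn=m^2+m^2-m^2=m^2$. Since $m\in\mathbb N$ is nonnegative, $\sqrt{m^2}=m$, so the argument becomes $m+m-m=m$. Hence $\tfrac{2}{3}\bigl(m+n-\sqrt{m^2+n^2-mn}\bigr)=\tfrac{2}{3}m$, and applying the ceiling yields $p(m,m)=\left\lceil\tfrac{2}{3}m\right\rceil$, exactly as asserted.

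There is essentially no obstacle here, as the statement is a pure algebraic consequence of Theorem~\ref{thm:main}(b). The only two points meriting a moment's attention are, first, the correct choice of sign for the square root, which is immediate from $m\ge 0$; and second, the degenerate case $m=0$, where $R_{0\times 0}=\emptyset$ forces $p(0,0)=0$, in agreement with $\left\lceil\tfrac{2}{3}\cdot 0\right\rceil=0$. For $m\ge 1$ the general formula applies verbatim, so the corollary follows in all cases.
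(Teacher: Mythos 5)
Your proposal is correct and matches the paper's intended argument: the corollary is a direct specialization of Theorem~\ref{thm:main}(b) at $n=m$, where the radicand collapses to $m^2$ and the formula becomes $\left\lceil\frac{2}{3}m\right\rceil$. Your aside on $m=0$ is harmless but unnecessary, since Theorem~\ref{thm:main}(b) is stated for all $m,n\in\mathbb{N}$ and so already covers the degenerate case.
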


%%%%%%%%%%%%%%%%%%%%%%%%%%%%%%%%%%%%%%%%%%%%%%%%%%%%%%%%%%%%%%%%%

\section{Verification of Theorem~\ref{thm:main}}

%%%%%%%%%%%

\subsection{Some notation and a refinement of Theorem~\ref{thm:main}(a)}

We see the rectangle $R_{m \times n}=[0,m] \times [0,n]$ as a union of $m$ \emph{columns} 
$$
C_k:=[k-1,k] \times [0,n]= S(k,1) \cup S(k,2) \cup \ldots \cup S(k,n),
$$
$k=1,\ldots,m$ (cf. Figure~\ref{fig:1}).
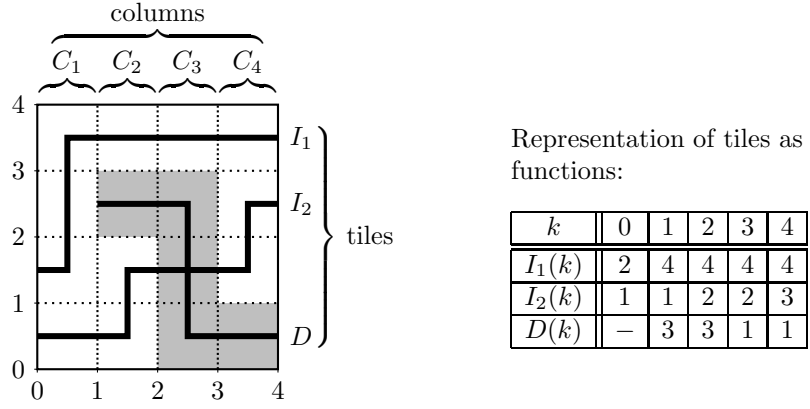
\begin{figure}
\begin{center}
\begin{tikzpicture}[xscale=.4,yscale=.44]

\fill[lightgray]
  (2,6)--(2,4)--(4,4)--(4,0)--(8,0)--(8,2)--(6,2)--(6,6)--cycle
  ;

\draw[thick]
  (0,0)--(8,0)--(8,8)--(0,8)--cycle 
	;

\draw[dotted,thick]
  (0,2)--(8,2)
  (0,4)--(8,4)
  (0,6)--(8,6)
	;

\draw[densely dotted, thick]
  (2,0)--(2,8)
	(4,0)--(4,8)
	(6,0)--(6,8)
	;

\draw[line width = .8mm]
  (0,1)--(3,1)--(3,3)--(7,3)--(7,5)--(8,5) node[right]{$I_2$}
	(0,3)--(1,3)--(1,7)--(8,7) node[right]{$I_1$}
	(2,5)--(5,5)--(5,1)--(8,1) node[right]{$D$}
  ;

\draw
  (0,0)--(0,-.1) node[below]{$0$}
  (2,0)--(2,-.1) node[below]{$1$}
  (4,0)--(4,-.1) node[below]{$2$}
  (6,0)--(6,-.1) node[below]{$3$}
  (8,0)--(8,-.1) node[below]{$4$}
  (0,0)--(-.1,0) node[left]{$0$}
  (0,2)--(-.1,2) node[left]{$1$}
  (0,4)--(-.1,4) node[left]{$2$}
  (0,6)--(-.1,6) node[left]{$3$}
  (0,8)--(-.1,8) node[left]{$4$}
	(1,8) node[above]{$\overbrace{\hspace*{7.5mm}}^{\textstyle C_1}$}
	(3,8) node[above]{$\overbrace{\hspace*{7.5mm}}^{\textstyle C_2}$}
	(5,8) node[above]{$\overbrace{\hspace*{7.5mm}}^{\textstyle C_3}$}
	(7,8) node[above]{$\overbrace{\hspace*{7.5mm}}^{\textstyle C_4}$}
	(4,9.5) node[above]{$\overbrace{\hspace*{28mm}}^{\textstyle\text{columns}}$}
	(7.9,4) node[right]{$\left.\begin{array}{c} \vspace{27mm} \end{array}\right\}$ tiles}
	;

\draw
  (15,4) node[right]{\begin{tabular}{l}
	  Representation of 
		tiles as\\ 
		functions:\\[2ex]
		$\begin{array}{|c||c|c|c|c|c|}
	  \hline
		k & 0 & 1 & 2 & 3 & 4 \\
		\hline\hline
		I_1(k) & 2 & 4 & 4 & 4 & 4 \\
		\hline
		I_2(k) & 1 & 1 & 2 & 2 & 3 \\
		\hline
		D(k) & - & 3 & 3 & 1 & 1 \\
		\hline
		\end{array}$
		\end{tabular}}
	;

\end{tikzpicture}
\end{center}
\caption{A covering of $R_{4 \times 4}$ with related notations and illustrations \label{fig:1}}
\end{figure}
A monotonous polyomino $P \subseteq R_{m \times n}$ (also called a \emph{tile}) that ranges from column $C_r$ to column $C_s$, $s \ge r$, is represented as a monotonous function 
$$
P:\{r-1,r,r+1,\ldots,s\}\to\{1,2,\ldots,n\}
$$ 
such that
$$
P \cap C_t=
\bigcup_{j=\min\{P(t-1),P(t)\}}^{\max\{P(t-1),P(t)\}}
S(t,j)=
\left\{\begin{array}{ll}
\bigcup_{j=P(t-1)}^{P(t)} S(t,j)
& \text{ if $P$ increases,}\\[1.5ex]
\bigcup_{j=P(t)}^{P(t-1)} S(t,j)
& \text{ if $P$ decreases}
\end{array}\right.
$$
for $t=r,r+1,\ldots,s$. Note that we use the same notation for a tile when it is considered as a union of lattice cells as well as when it is seen as a function. Visual illustrations of tiles can be given by rectilinear arcs passing through the centres of the respective lattice cells. For example, Figure~\ref{fig:1} shows a covering of $R_{4 \times 4}$ by two increasing tiles $I_1,I_2$ with the same domain $\{0,\ldots,4\}$ and a decreasing tile $D$ (greyed) with domain $\{1,\ldots,4\}$.

A covering of $R_{m \times n}$ by monotonous polyominoes is called an \emph{$(i,d)$-covering} if it consists of $i$ increasing and $d$ decreasing polyominoes. If some polyomino is both decreasing and increasing, we assign it (arbitrarily) to only one of the classes of increasing or decreasing polyominoes. In particular, an $(i,d)$-covering consists of $i+d$ tiles. 
For $n,i,d \in \mathbb{N}$ with $n \ge i+d$, we define a relative of $m(n,p)$ as
\[
m(n,i,d):=\sup\{m \in \mathbb{N}: \text{the rectangle } R_{m \times n}\text{ has an $(i,d)$-covering}\}.
\]
We shall show the following refinement of Theorem~\ref{thm:main}(a) before we come to the proof of Theorem~\ref{thm:main}.

\begin{prop}\label{prop}
For all $n,i,d \in \mathbb{N}$ with $n \ge i+d$,
$$
m(n,i,d)=i+d+\left\lfloor \frac{id}{n-(i+d)} \right\rfloor
$$
with the natural agreement that $m(n,i,d)=\infty$ if $i+d=n$; i.e., if the denominator on the right-hand side is $0$.
\end{prop}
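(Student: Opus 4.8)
My plan is to prove two matching bounds. Write $e:=n-(i+d)$. The degenerate case $e=0$ is immediate: each horizontal stripe $[0,m]\times[j-1,j]$ is simultaneously an increasing and a decreasing polyomino, so declaring $i$ of the $n$ stripes increasing and $d$ decreasing gives an $(i,d)$-covering of every $R_{m\times n}$, i.e.\ $m(n,i,d)=\infty$. Assume henceforth $e\ge 1$. Since $i+d+\lfloor id/e\rfloor$ is exactly the largest integer $m$ satisfying $(m-(i+d))\,e\le id$, and since a covering of $R_{m\times n}$ restricts (by truncation of the tiles) to one of $R_{(m-1)\times n}$, it suffices to prove: (i) \emph{[necessary condition]} every $(i,d)$-covering of $R_{m\times n}$ satisfies $(m-(i+d))(n-(i+d))\le id$; and (ii) \emph{[construction]} $R_{m\times n}$ admits an $(i,d)$-covering when $m=i+d+\lfloor id/e\rfloor$. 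In (i) I would also observe that we may assume every tile has full domain $\{0,1,\dots,m\}$, because extending a tile by repeating its boundary value keeps it monotone and only enlarges the covered set.

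For (i) I would start from a \emph{coverage--area identity}. With all tiles full-width, a tile $P$ consists of exactly $m+\mu_P$ cells, where $\mu_P:=|P(m)-P(0)|$ is its total vertical displacement; summing incidences with multiplicity gives $\sum_P(m+\mu_P)=mn+\Omega$, where $\Omega\ge 0$ denotes the total over-coverage. Hence $\sum_P\mu_P=me+\Omega$. Introducing the displacement deficit $\nu_P:=(n-1)-\mu_P\ge 0$ and substituting, the inequality $(m-(i+d))(n-(i+d))\le id$ becomes \emph{equivalent} to
\[
\Omega+\sum_P\nu_P\ \ge\ (i+d)(i+d-1)-id .
\]
Thus the entire force of the upper bound is concentrated in a lower bound on the total ``inefficiency'' $\Omega+\sum_P\nu_P$ of any covering, which must be established structurally rather than from the identity above.

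To prove this inefficiency inequality I would argue by charging to pairs of tiles, guided by the identity $(i+d)(i+d-1)-id=2\binom{i}{2}+2\binom{d}{2}+id$: I would attempt to assign, to each unordered pair of distinct tiles, a disjoint budget of over-coverage-or-deficit equal to $2$ when the two tiles share monotonicity type and $1$ when they are of opposite type. The mechanism should come from the covering of the extreme rows $1$ and $n$ together with the extreme columns: along each such boundary the tiles reduce to nested prefixes and suffixes, a rising tile and a falling tile that are both active must cross (forcing an overlap), and a tile that fails to start or end at an extreme value contributes to some $\nu_P$. Making this per-pair bookkeeping precise, so that the charges are genuinely disjoint and total the claimed amount, is the step I expect to be the \textbf{main obstacle}: it is exactly where the monotonicity must be used quantitatively rather than qualitatively, and a careless accounting would double-count an overlap shared by three or more tiles.

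For (ii) I would give an explicit construction and verify coverage column by column. Guided by the small cases $I=(1,2,2,3)$, $D=(3,3,1,1)$ for $R_{3\times 3}$ and $I_1=(1,2,4,4,4,4)$, $I_2=(1,1,1,2,2,4)$, $D=(4,3,3,3,1,1)$ for $R_{5\times 4}$, the idea is to let the $i$ increasing tiles sweep upward and the $d$ decreasing tiles sweep downward on a staggered schedule chosen so that every column is covered while the displacement spent per column stays at the forced minimum $e$ wherever possible. Concretely I would define the tiles by a blockwise formula in the column index, distribute the $\lfloor id/e\rfloor$ ``extra'' columns so that the increasing--decreasing pairs share responsibility for them, and then check that in each column the union of the tile intervals is all of $\{1,\dots,n\}$. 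This direction is explicit but intricate; once the schedule is fixed the verification is routine, so I regard it as less conceptually demanding than (i).
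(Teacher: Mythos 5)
Your framework is set up correctly as far as it goes: the identity $\sum_P \mu_P = me+\Omega$, the reformulation of the bound as $\Omega+\sum_P\nu_P \ge 2\binom{i}{2}+2\binom{d}{2}+id$, and the budget of $2$ per same-type pair and $1$ per mixed pair are exactly the numbers behind the paper's central count $mn+i'd' \le i'(m+n-i')+d'(m+n-d')$. But both halves of your plan stop precisely where the work happens, and the mechanism you sketch for (i) contains a false step. The claim that a rising and a falling tile ``that are both active must cross'' is wrong for arbitrary coverings: the constant full-width tiles $I\equiv n$ and $D\equiv 1$ are of opposite type and disjoint. Their missing overlap would have to be paid from their deficits $\nu$, but those same deficits are the only funding source for your same-type charges, so the disjointness of the charging scheme is not merely delicate --- as set up it cannot be completed without new ideas (you yourself flag this as the main obstacle). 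The paper avoids per-pair charging altogether with two devices absent from your plan. First, a normalization (the uncrossing $L[I,J],U[I,J]$ of Lemma~\ref{lem:8}, iterated in Corollary~\ref{cor:9}): any family of same-type tiles can be replaced by \emph{non-overlapping} full-width tiles of the same type covering at least as much. After that, the endpoints within a class are automatically distinct, so $\sum\nu \ge i'(i'-1)+d'(d'-1)$ falls out of trivial endpoint sums, and every multiply covered cell is covered by exactly one tile of each type, so the overlap witnesses of distinct mixed pairs are automatically distinct cells --- the double-counting problem dissolves rather than being solved. Second, extremality (Lemma~\ref{lem:11}): the crossing of every increasing with every decreasing tile is proved \emph{only at} $m=m(n,i,d)$ --- if $I(0)\ge D(0)$, extend both tiles one column leftward with $I(-1)=1$, $D(-1)=n$, covering a new column and contradicting maximality --- and sub-extremal widths inherit the bound. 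So the $id$ crossings are an extremal phenomenon, not a structural property of every covering, and your part (i), which asserts the inequality for \emph{every} covering via structural crossings, would have to be reorganized around this point.

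Part (ii) is likewise a declaration of intent: no blockwise formula is written down, and the two small examples do not determine a general schedule. In the paper this half is not routine. It is an induction on $e=n-(i+d)$ driven by the recursion $\tilde m(e,i,d)\ge d+e+\tilde m(e,i-e,d)$ of Lemma~\ref{lem:tilde_m} (spend $d$ columns lowering the decreasing tiles by $e$, then $e$ columns lifting $e$ increasing tiles to the top, and recurse on an $(i-e,d)$-covering of the residual rectangle, glued on via the canonical left-column starts of Lemma~\ref{lem:12}); the induction step further needs the division algorithm $i=\alpha e+\iota$, $d=\beta e+\delta$ \emph{and} a reflection of the rectangle across $y=-x$ that trades height for excess, converting $\tilde m\left(\left\lfloor \iota\delta/e\right\rfloor,\iota,\delta\right)\ge \iota+\delta+e$ into $\tilde m(e,\iota,\delta)\ge \iota+\delta+\left\lfloor \iota\delta/e\right\rfloor$. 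The floor $\left\lfloor id/e\right\rfloor$ is earned exactly in this bookkeeping, via $\alpha\beta e+\alpha\delta+\beta\iota+\left\lfloor \iota\delta/e\right\rfloor=\left\lfloor id/e\right\rfloor$, which your sketch defers as ``routine.'' A direct explicit schedule may well exist, but until one is written down and column-by-column coverage is verified, both halves of the proposition remain open in your proposal.
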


%%%%%%%%%%%

\subsection{Proof of Proposition~\ref{prop}: upper estimate}

The main preparation of the proof is done by showing that, for any $i$ increasing polyominoes in $R_{m \times n}$, there exist $i' \le i$ increasing polyominoes in $R_{m \times n}$ that cover all the $i$ original polyominoes, do not overlap and are all defined on the full domain $\{0,\ldots,m\}$; i.e., contain cells of all the columns $C_1,\ldots,C_m$. For that, we first show that the original polyominoes can be replaced by ones with domain $\{0,\ldots,m\}$ and whose highest cells in column $C_m$ are mutually different (see Lemma~\ref{lem:upper_row} and Corollary~\ref{cor:7}). Then we show in Lemma~\ref{lem:8} that two increasing polyominoes with different final cell in $C_m$ can be replaced by two non-overlapping ones, before all polyominoes are made non-overlapping in Corollary~\ref{cor:9}.

\begin{lem}\label{lem:upper_row}
Let $I_1,\ldots,I_i \subseteq R_{m \times n}$ be increasing polyominoes, $m,n,i \in \mathbb{N} \setminus \{0\}$. Then there exist mutually distinct increasing polyominoes $I_0^1,I_1^1,\ldots,I^1_{i_1} \subseteq R_{m \times n}$, $0 \le i_1 < i$, that are fully defined on $\{0,\ldots,m\}$ and satisfy
\begin{equation}\label{eq:l6-1}
I_1 \cup \ldots \cup I_i \subseteq I_0^1 \cup \ldots\cup I^1_{i_1}
\end{equation}
as well as 
\begin{equation}\label{eq:l6-2}
I_1^1,\ldots,I_{i_1}^1 \subseteq R_{m \times (I_0^1(m)-1)}; \quad\text{that is,}\quad
\max\{I_1^1(m),\ldots,I_{i_1}^1(m)\} < I^1_0(m).
\end{equation}
\end{lem}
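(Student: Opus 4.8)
The plan is to first reduce to polyominoes with full domain, then replace the collection by a columnwise-sorted (nested) collection covering exactly the same cells, and finally peel off the topmost member as $I_0^1$ while pushing the remainder below its final level. First I would extend each $I_j$ to the full domain $\{0,\ldots,m\}$: an increasing polyomino given on $\{r-1,\ldots,s\}$ is prolonged to the left by keeping the value $I_j(r-1)$ on the columns $C_1,\ldots,C_{r-1}$ and to the right by keeping the value $I_j(s)$ on $C_{s+1},\ldots,C_m$. This keeps the function nondecreasing, stays inside $R_{m \times n}$, and only enlarges the cell set, so it is harmless for the covering \eqref{eq:l6-1}. From now on all $I_j$ are functions $\{0,\ldots,m\}\to\{1,\ldots,n\}$, and since each is nondecreasing its topmost cell sits in $C_m$ at row $I_j(m)$; in particular no cell of any $I_j$ lies above row $h:=\max_j I_j(m)$.

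The central device is columnwise sorting. For two increasing polyominoes $P,Q$ on the full domain put $(P\vee Q)(t):=\max\{P(t),Q(t)\}$ and $(P\wedge Q)(t):=\min\{P(t),Q(t)\}$; both are again increasing. I would verify the cell-level identity $P\cup Q=(P\vee Q)\cup(P\wedge Q)$ by a short columnwise case distinction on the four values $P(t-1),P(t),Q(t-1),Q(t)$. Applying this exchange repeatedly to adjacent pairs (a bubble sort) turns $I_1,\ldots,I_i$ into the columnwise order statistics $g_1\ge g_2\ge\cdots\ge g_i$, each of which is increasing and fully defined on $\{0,\ldots,m\}$, with $\bigcup_k g_k=\bigcup_j I_j$ and $g_1(m)=h$.

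Now I would set $I_0^1:=g_1$ and, for $k\ge 2$, let $g_k'$ be $g_k$ with its row-$h$ cells deleted and then re-extended flatly to the full domain at height $h-1$; thus every $g_k'$ lies in $R_{m\times(h-1)}$. Coverage is then immediate: a cell of $\bigcup_k g_k$ either lies in row $h$, in which case it lies in $g_1=I_0^1$ because $g_1$ is the pointwise maximum and membership in row $h$ forces $g_1(t)=h$ there, or it lies in a row $\le h-1$, in which case it survives in the corresponding $g_k'$. Crucially, the low cells of $g_1$ itself are already contained in $I_0^1$, so $g_1'$ is not needed; this is exactly what keeps the number of lower tiles at $i-1$ rather than $i$. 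Discarding empty and repeated tiles among $g_2',\ldots,g_i'$ and relabelling yields mutually distinct increasing polyominoes $I_1^1,\ldots,I_{i_1}^1\subseteq R_{m\times(h-1)}$ with $i_1\le i-1<i$; together with $I_0^1$ they satisfy \eqref{eq:l6-1}, and since each has final value $\le h-1<h=I_0^1(m)$ they satisfy \eqref{eq:l6-2}.

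I expect the main obstacle to be the cell-level exchange identity $P\cup Q=(P\vee Q)\cup(P\wedge Q)$: the subtlety is that the pointwise maximum of two increasing polyominoes need not contain either one (a staircase that climbs late can drop below the envelope's lower edge in the transition column), so one really must check that whatever the envelope misses is picked up by the lower function. The remaining points --- that order statistics of nondecreasing functions stay nondecreasing, and that deleting the top row of an increasing polyomino leaves a valid increasing polyomino after re-extension --- are routine.
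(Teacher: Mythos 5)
Your proof is correct, but it reaches the conclusion by a genuinely different mechanism than the paper. The paper also begins by extending all tiles to the full domain, but then makes an \emph{extremal choice among the original tiles}: it picks a tile $I_i$ covering the largest portion of the upper strip $[0,m]\times[h-1,n]$ (where $h=\max_j I_j(m)$), observes via monotonicity that this single original tile already contains everything the family covers in that strip (this is the paper's inclusion \eqref{eq:l6-3}), sets $I_0^1:=I_i$, and truncates the remaining $i-1$ tiles by $I_k^1(\cdot):=\min\{I_k(\cdot),h-1\}$ --- exactly your truncation. You instead first normalize the whole family by columnwise sorting with the pointwise operations $\vee$ and $\wedge$, using the exchange identity $P\cup Q=(P\vee Q)\cup(P\wedge Q)$ (which does hold: with $a=P(t-1)$, $b=P(t)$, $c=Q(t-1)$, $d=Q(t)$ one checks $[a,b]\cup[c,d]=[\min\{a,c\},\min\{b,d\}]\cup[\max\{a,c\},\max\{b,d\}]$ for $a\le b$, $c\le d$, and since bubble sort is an oblivious comparator network acting coordinatewise, the result is the order statistics $g_1\ge\cdots\ge g_i$), and then peel off the envelope $g_1$, which need not be any original tile. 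Your diagnosis of why the sorting step is needed is exactly right: with the naive choice $I_0^1:=\bigvee_j I_j$ one could not drop any truncated tile, since the low cells of an original tile need not lie in the envelope; sorting fixes this because afterwards the low cells of $g_1$ lie in $g_1$ itself, while the paper sidesteps the issue by keeping the top tile among the originals, so that dropping its truncation is trivially safe. The trade-off: the paper's argument is shorter and needs no exchange lemma; your identity is a clean reusable fact showing more, namely that any family of increasing tiles can be replaced by its columnwise order statistics without changing the covered cell set (though note the sorted tiles may still overlap, so it does not subsume the later non-overlap construction of Lemma~\ref{lem:8}). One small point of care: for $h=1$ your ``re-extension at height $h-1$'' is vacuous, and your handling by discarding empty tiles matches the paper's explicit case $\max\{I_1(m),\ldots,I_i(m)\}=1$, $i_1:=0$.
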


\begin{proof}
We suppose that all of $I_1,\ldots,I_i$ are fully defined on $\{0,\ldots,m\}$, since smaller tiles can be extended constantly to the left and to the right. W.l.o.g., $I_i$ is among those tiles from $I_1,\ldots,I_i$ that cover the largest portion of the upper part $[0,m] \times [\max\{I_1(m),\ldots,I_i(m)\}-1,n]$ of $R_{m \times n}$. Then $I_i(m)=\max\{I_1(m),\ldots,I_i(m)\}$ and, by monotonicity of the tiles,
\begin{equation}\label{eq:l6-3}
(I_1 \cup \ldots \cup I_i) \cap ([0,m] \times [\max\{I_1(m),\ldots,I_i(m)\}-1,n]) \subseteq I_i.
\end{equation}

We put $I_0^1:=I_i$ and 
$$
I^1_k(\cdot):=\min\{I_k(\cdot),\max\{I_1(m),\ldots,I_i(m)\}-1\}
$$ 
for $k=1,\ldots,i-1$, provided $\max\{I_1(m),\ldots,I_i(m)\} \ge 2$. If $\max\{I_1(m),\ldots,I_i(m)\} = 1$ we put $i_1:=0$. If some of $I^1_1,\ldots,I^1_{i-1}$ coincide, we keep only one of them; that is, w.l.o.g., $I^1_1,\ldots,I^1_{i_1}$ represent all of $I^1_1,\ldots,I^1_{i-1}$, where $0 \le i_1 \le i-1$.

The last definition yields directly \eqref{eq:l6-2}. Moreover, \eqref{eq:l6-1} is obtained by
\[
\begin{array}{rcl}
I_1 \cup \ldots \cup I_i 
&=&\big((I_1 \cup \ldots \cup I_i) \cap ([0,m] \times [\max\{I_1(m),\ldots,I_i(m)\}-1,n])\big)\\
&& \cup \left((I_1 \cup \ldots \cup I_i) \cap R_{m \times (\max\{I_1(m),\ldots,I_i(m)\}-1)}\right)\\
&\stackrel{\eqref{eq:l6-3}}{\subseteq}& I^1_0 \cup \left((I_1^1 \cup \ldots \cup I_{i-1}^1) \cup I_0^1\right)\\
&=& I^1_0 \cup \ldots \cup I^1_{i_1}.
\end{array}
\]
\end{proof}

\begin{cor}\label{cor:7}
Let $I_1,\ldots,I_i \subseteq R_{m \times n}$ be increasing polyominoes, $m,n,i \in \mathbb{N} \setminus \{0\}$. Then there exist increasing polyominoes $I'_1,\ldots,I'_{i'} \subseteq R_{m \times n}$, $i' \le i$, that 
 are fully defined on $\{0,\ldots,m\}$ and satisfy
\begin{equation}\label{eq:cor7-1}
I_1 \cup \ldots \cup I_i \subseteq I'_1 \cup \ldots \cup I'_{i'}
\end{equation}
as well as
\begin{equation}\label{eq:cor7-2}
I_1'(m) > I_2'(m) > \ldots > I'_{i'}(m).
\end{equation}
\end{cor}

\begin{proof}
We apply Lemma~\ref{lem:upper_row} to the tiles $I_1,\ldots,I_i$ on $R_{m \times n}$ and obtain $I_0^1 \subseteq R_{m \times n}$ and $I_1^1,\ldots,I_{i_1}^1 \subseteq R_{m \times n_1}$ with $n_1=\max\{I_1(m),\ldots,I_i(m)\}-1=I_0^1(m)-1$ and $i_1 < i$. Next we apply Lemma~\ref{lem:upper_row} to the tiles $I^1_1,\ldots,I^1_{i_1}$ on $R_{m \times n_1}$ and obtain $I_0^2 \subseteq R_{m \times n_1}$ and $I_1^2,\ldots,I_{i_2}^2 \subseteq R_{m \times n_2}$ with $n_2=\max\{I^1_1(m),\ldots,I^1_{i_1}(m)\}-1=I_0^2(m)-1$ and $i_2 < i_1$.
We apply this procedure $j$ times until $i_{j}=0$. Then we put $i':=j$ and $I_l':=I_0^l$, $l=1,\ldots,i'$.

Clearly, $i'=j \le i$, because $i>i_1>i_2>\ldots>i_j=0$. Inequalities \eqref{eq:cor7-2} follow from 
$$
I'_{l+1}(m)=I_0^{l+1}(m) \le n_l=I_0^l(m)-1=I'_l(m)-1<I'_l(m)
$$
for $l=1,\ldots,j-1$. Finally, \eqref{eq:cor7-1} is obtained by
$$
I_1 \cup \ldots \cup I_i \subseteq I_0^1 \cup (I_1^1\cup\ldots\cup I_{i_1}^1) \subseteq I_0^1 \cup I_0^2 \cup (I_1^2\cup\ldots\cup I_{i_2}^2) \subseteq \ldots \subseteq I_0^1 \cup I_0^2 \cup \ldots \cup I_0^{j}.
$$
\end{proof}

\begin{lem}\label{lem:8}
Let $I,J:\{0,\ldots,m\}\to\{1,\ldots,n\}$ be increasing polyominoes in $R_{m \times n}$ with $I(m) \ne J(m)$. Then $L=L[I,J]$ and $U=U[I,J]$, defined for $k \in \{0,\ldots,m\}$ by
\begin{align*}
L(k)&:=\min\{I(k),J(k)\},\\
U(k)&:=\left\{
\begin{array}{ll}
\max\big\{I(k),J(k),\min\{I(k+1),J(k+1)\}+1\big\},& k < m,\\
\max\{I(k),J(k)\}, & k=m,
\end{array}
\right.
\end{align*}
satisfy the following:
\begin{itemize}
\item[(i)]
$L$ and $U$ are increasing polyominoes,
\item[(ii)]
$I \cup J \subseteq L \cup U \subseteq R_{m \times n}$,
\item[(iii)]
$L$ and $U$ do not overlap and $L$ is below $U$; that is,
$L(k) < U(k-1)$ for $k=1,\ldots,m$.
\end{itemize}
\end{lem}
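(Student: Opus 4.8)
The plan is to exploit the fact that an increasing polyomino, viewed as a function, is precisely a non-decreasing map into $\{1,\ldots,n\}$, and that in column $C_t$ it occupies exactly the integer rows from its value at $t-1$ up to its value at $t$. Thus all three assertions reduce to pointwise (in $k$) or column-wise (in $t$) inequalities among the four numbers $I(t-1),I(t),J(t-1),J(t)$, and the whole argument becomes elementary arithmetic with $\min$ and $\max$. Since the definitions of $L$ and $U$ are symmetric in $I$ and $J$, I may freely assume a convenient ordering of these values whenever that is helpful.

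For part (i), that $L$ is an increasing polyomino is immediate: the pointwise minimum of two non-decreasing functions with values in $\{1,\ldots,n\}$ is again such a function. For $U$ the two points to check are that $U$ is non-decreasing and that it stays $\le n$. Monotonicity I would verify term by term, bounding each argument defining $U(k-1)$ by $U(k)$, using $I(k-1)\le I(k)$, $J(k-1)\le J(k)$, and the monotonicity of $\min\{I,J\}$; the only delicate passage is to $k=m$, where one needs $\min\{I(m),J(m)\}+1\le\max\{I(m),J(m)\}$, which holds exactly because $I(m)\ne J(m)$. The same hypothesis yields $U(k)\le n$: if $\min\{I(k+1),J(k+1)\}$ equalled $n$, then $I$ and $J$ would both be constantly $n$ from column $k+1$ on, forcing $I(m)=J(m)=n$, which is excluded. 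This is the one place where the assumption $I(m)\ne J(m)$ is genuinely used, and it is the subtle point of part (i).

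Part (iii) then costs nothing: for $k\in\{1,\ldots,m\}$ the index $k-1$ is $<m$, so by definition $U(k-1)\ge\min\{I(k),J(k)\}+1=L(k)+1>L(k)$, which is exactly $L(k)<U(k-1)$.

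The real work is the first inclusion in part (ii); the second, $L\cup U\subseteq R_{m\times n}$, is merely the range statement already established. I would argue column by column: writing $a=I(t-1)$, $b=I(t)$, $c=J(t-1)$, $d=J(t)$, the union $I\cup J$ occupies in $C_t$ the rows of $[a,b]\cup[c,d]$, while $L$ covers $[\min\{a,c\},\min\{b,d\}]$ and $U$ covers $[\max\{a,c,\min\{b,d\}+1\},\,U(t)]$ with $U(t)\ge\max\{b,d\}$. The key claim is that every occupied row $j$ is either $\le\min\{b,d\}$ (hence in $L$) or satisfies $j\ge\max\{a,c\}$ (hence, together with $j\ge\min\{b,d\}+1$ and $j\le\max\{b,d\}$, in $U$). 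Equivalently, no row of $I\cup J$ lies strictly between $\min\{b,d\}$ and $\max\{a,c\}$. I would prove this by contraposition: if $j<\max\{a,c\}$, say $\max\{a,c\}=c$, then $j\notin[c,d]$ forces $j\in[a,b]$, so $j\le b$, while $j<c\le d$ gives $j\le d-1<d$; together $j\le\min\{b,d\}$. This ``no row lands in the gap'' statement is the heart of the lemma---it is precisely what the $+1$ correction term in the definition of $U$ is engineered to make true---and it is the step I expect to demand the most care in the case analysis.
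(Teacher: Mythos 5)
Your proposal is correct. Parts (i) and (iii) coincide with the paper's argument: the same term-by-term monotonicity check, with the same delicate step $\min\{I(m),J(m)\}+1\le\max\{I(m),J(m)\}$ at the passage to $k=m$ (this is where $I(m)\ne J(m)$ enters), and the same one-line computation $U(k-1)\ge\min\{I(k),J(k)\}+1=L(k)+1$ for (iii). The genuine difference is in the organization of part (ii). The paper also works column by column, but splits into three configuration cases --- $I(k)<J(k-1)$ (the column intervals of $I$ and $J$ disjoint, $I$ below), the symmetric case $J(k)<I(k-1)$, and the overlapping case $I(k)\ge J(k-1)$, $J(k)\ge I(k-1)$ --- showing in the first two that $I\cap C_k\subseteq L\cap C_k$ and $J\cap C_k\subseteq U\cap C_k$, and in the third that $L\cup U$ covers the whole interval of rows from $\min\{I(k-1),J(k-1)\}$ to $\max\{I(k),J(k)\}$. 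You replace this by a single per-row dichotomy (every occupied row $j$ satisfies $j\le\min\{b,d\}$ or $j\ge\max\{a,c\}$, i.e.\ no occupied row lies in the gap), proved by one contraposition after a WLOG use of the $I,J$-symmetry; your case A also implicitly uses $j\ge\min\{a,c\}$, which is automatic for $j\in[a,b]\cup[c,d]$, so the argument is complete. This is a somewhat more economical packaging of the same min/max bookkeeping, and your ``no row in the gap'' formulation makes transparent exactly what the $+1$ term in $U$ is for. A second small divergence: you bound $U(k)\le n$ directly (if $\min\{I(k+1),J(k+1)\}=n$ then both tiles are constantly $n$ from column $k+1$ on, forcing $I(m)=J(m)=n$), whereas the paper gets this for free from the already-established monotonicity via $U(k)\le U(m)=\max\{I(m),J(m)\}\le n$; both are valid, though your remark that the range bound is ``the one place'' where $I(m)\ne J(m)$ is genuinely used is slightly off, since you had already invoked it for the monotonicity step at $k=m$ --- and in the paper's route that monotonicity use is the only one needed.
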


\begin{proof}
\emph{Claim (i).}
Since $I$ and $J$ are increasing, it is clear that $L$ is increasing and that $U(k-1) \le U(k)$ for $k=1,\ldots,m-1$. Finally, $I(m) \ne J(m)$ implies
\begin{align*}
U(m-1)&=\max\big\{I(m-1),J(m-1),\min\{I(m),J(m)\}+1\big\}\\
&\le\max\big\{I(m-1),J(m-1),\max\{I(m),J(m)\}\big\}\\
&=U(m).
\end{align*}

\emph{Claim (ii).} By $I,J \subseteq R_{m \times n}$ and by claim (i), $1 \le L(k) \le n$ and $1 \le U(k) \le U(m) \le n$. Thus, $L \cup U \subseteq R_{m \times n}$.

To see the inclusion $I \cup J \subseteq L \cup U$, we consider the columns $C_k$, $k=1,\ldots,m$, of $R_{m \times n}$ separately. We need to show that $(I \cup J) \cap C_k \subseteq (L \cup U) \cap C_k$.  
Recall that $I \cap C_k$ consists of the cells $S(k,I(k-1)),S(k,I(k-1)+1),\ldots,S(k,I(k))$; and analogously for $J,L,U$.

\emph{Case 1: $I(k) < J(k-1)$. } Then $I(k) < J(k-1) \le J(k)$, so that
\begin{equation}\label{eq:lem8-1}
\min\{I(k),J(k)\}=I(k)\le J(k-1)-1.
\end{equation}

Now $I \cap C_k \subseteq L \cap C_k$, because $L(k-1) \le I(k-1)$ and $L(k) \stackrel{\eqref{eq:lem8-1}}{=}I(k)$. Also $J \cap C_k \subseteq U \cap C_k$, since $U(k) \ge J(k)$ and 
$$
U(k-1) \stackrel{\eqref{eq:lem8-1}}{\le}\max\{I(k-1),J(k-1),J(k-1)\}\le\max\{I(k),J(k-1)\}\stackrel{(\text{Case 1})}{=}J(k-1).
$$
Accordingly, $(I \cup J) \cap C_k \subseteq (L \cup U) \cap C_k$.

\emph{Case 2: $J(k) < I(k-1)$. } This is analogous to Case 1.

\emph{Case 3: $I(k) \ge J(k-1)$ and $J(k) \ge I(k-1)$. } By monotonicity,
\begin{equation}\label{eq:lem8-2}
\min\{I(k),J(k)\} \ge \max\{I(k-1),J(k-1)\}.
\end{equation}

By the three observations $L(k-1)=\min\{I(k-1),J(k-1)\}$,
\begin{eqnarray*}
U(k-1)&=&\max\big\{\max\{I(k-1),J(k-1)\},\min\{I(k),J(k)\}+1\big\}\\
&\stackrel{\eqref{eq:lem8-2}}{\le}&
\max\big\{\min\{I(k),J(k)\},\min\{I(k),J(k)\}+1\big\}\\
&=&L(k)+1,
\end{eqnarray*}
and $U(k) \ge \max\{I(k),J(k)\}$, we get
$$
(L \cup U)\cap C_k \supseteq S(k,\min\{I(k-1),J(k-1)\}) 
\cup  \ldots \cup S(k,\max\{I(k),J(k)\}) \supseteq (I \cup J) \cap C_k.
$$

\emph{Claim (iii).} For $k=1,\ldots,m$, 
$$
U(k-1) \ge \min\{I(k),J(k)\}+1= L(k)+1 > L(k).
$$
That is, the lowest cell of $U$ in column $C_k$ is placed strictly higher than the highest cell of $L$ in $C_k$. This gives (iii).
\end{proof}

\begin{cor}\label{cor:9}
Let $I_1,\ldots,I_i \subseteq R_{m \times n}$ be increasing polyominoes, $m,n,i \in \mathbb{N} \setminus \{0\}$. Then there exist non-overlapping increasing polyominoes $I'_1,\ldots,I'_{i'} \subseteq R_{m \times n}$, $0 \le i' \le i$, that are fully defined on $\{0,\ldots,m\}$ and satisfy
$$
I_1 \cup \ldots \cup I_i \subseteq I'_1 \cup \ldots \cup I'_{i'}.
$$
\end{cor}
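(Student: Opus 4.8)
The plan is to combine Corollary~\ref{cor:7} and Lemma~\ref{lem:8}. First I would apply Corollary~\ref{cor:7} to the given tiles $I_1,\ldots,I_i$, obtaining increasing polyominoes $I'_1,\ldots,I'_{i'}$, $i'\le i$, that are fully defined on $\{0,\ldots,m\}$, cover $I_1\cup\ldots\cup I_i$, and have strictly decreasing final values $I'_1(m)>\ldots>I'_{i'}(m)$. In particular these final values are pairwise distinct, which is exactly the hypothesis required to invoke Lemma~\ref{lem:8}. It therefore remains to turn a family of fully defined increasing polyominoes with pairwise distinct final values into a non-overlapping one, without increasing their number and still covering their union, and I would prove this by induction on the number $i'$ of tiles.

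For the induction it is convenient to record one preparatory observation about Lemma~\ref{lem:8}: if two increasing polyominoes $I,J$ are both \emph{strictly above} a fixed polyomino $B$, in the sense that $B(k)<I(k-1)$ and $B(k)<J(k-1)$ for $k=1,\ldots,m$, then both $L[I,J]$ and $U[I,J]$ are again strictly above $B$, since $L(k-1)=\min\{I(k-1),J(k-1)\}>B(k)$ and $U(k-1)\ge\max\{I(k-1),J(k-1)\}>B(k)$. As the whole construction will consist exclusively of applications of the $L,U$ operation of Lemma~\ref{lem:8}, this shows that the property ``all tiles lie strictly above $B$'' is preserved throughout; this is the device that will keep the recursion from colliding with the bottom tile.

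In the inductive step I would first push the tile $I'_{i'}$ of smallest final value to the bottom. Starting from $\beta:=I'_{i'}$, I run through $j=i'-1,i'-2,\ldots,1$ and, using the current values of $I'_j$ and $\beta$, replace the pair $(I'_j,\beta)$ by $(U[I'_j,\beta],L[I'_j,\beta])$, keeping the upper tile in slot $j$ and carrying the lower tile along as the new $\beta$. Because $L(m)=\min$ and $U(m)=\max$, every $I'_j(m)$ stays unchanged and $\beta(m)$ stays equal to the global minimum $I'_{i'}(m)$, so the final values remain distinct and Lemma~\ref{lem:8} is applicable at each step; moreover $\beta$ only decreases pointwise along the loop. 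Lemma~\ref{lem:8}(iii) places $\beta$ strictly below $I'_j$ right after step $j$, and since $\beta$ afterwards only drops while the already fixed tiles $I'_{j'}$ with $j'>j$ stay put, the final tile $B:=\beta$ ends up strictly below every $I'_1,\ldots,I'_{i'-1}$. By Lemma~\ref{lem:8}(ii) the union is preserved and everything stays inside $R_{m\times n}$. Now I apply the induction hypothesis to $I'_1,\ldots,I'_{i'-1}$, which still have distinct final values, to make them pairwise non-overlapping; by the observation of the previous paragraph these tiles remain strictly above $B$, so together with $B$ they form $i'$ pairwise non-overlapping, fully defined increasing polyominoes covering the original union. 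The base cases $i'\le 1$ are trivial.

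The main obstacle is precisely the interaction just addressed: after separating out the bottom tile $B$, the recursive separation of the remaining tiles must not drag any of them down into $B$. The clean way around this is the monotonicity built into Lemma~\ref{lem:8} — the operation never moves a tile below a tile it already lay above — formalized by the ``strictly above $B$'' invariant. The only remaining points needing care are the bookkeeping that the pointwise-decreasing bottom candidate $\beta$ stays below all previously separated upper tiles, and the observation that $L$ and $U$ preserve the common set of final values, so that Lemma~\ref{lem:8} stays applicable at every step.
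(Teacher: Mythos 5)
Your proposal is correct and takes essentially the same route as the paper: after Corollary~\ref{cor:7}, both arguments make a single pass with the $L,U$ operation of Lemma~\ref{lem:8} to peel off the pointwise-minimum tile as a bottom tile lying strictly below all the updated upper tiles, and then repeat (in your case, recurse) on the remaining tiles. Your explicit ``strictly above $B$'' invariant, preserved by every $L,U$ application, carefully justifies a point the paper's proof leaves implicit, namely that later separation steps never drag a tile down into a previously extracted bottom tile.
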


\begin{proof}
By Corollary~\ref{cor:7}, there are increasing polyominoes $I_1^0,\ldots,I_{i'}^0 \subseteq R_{m \times n}$, $i'\le i$, that are fully defined on $\{0,\ldots,m\}$ such that 
$$
I_1 \cup \ldots \cup I_i \subseteq I_1^0 \cup\ldots\cup I_{i'}^0 
\quad\text{and}\quad I_1^0(m) < \ldots < I_{i'}^0(m).
$$ 

In a first step we use Lemma~\ref{lem:8} for defining
$$
\begin{array}{ll}
L_2:=L[I^0_1,I^0_2],& I^1_2:=U[I^0_1,I^0_2],\\
L_3:=L[L_2,I^0_3],& I^1_3:=U[L_2,I^0_3],\\
L_4:=L[L_3,I^0_4],& I^1_4:=U[L_3,I^0_4],\\ 
\ldots,\\
L_{i'}:=L[L_{i'-1},I^0_{i'}],& I^1_{i'}:=U[L_{i'-1},I^0_{i'}]\\
\text{and} \quad 
I'_1:=L_{i'}.
\end{array}
$$
Then 
$$
I^0_1 \cup \ldots \cup I^0_{i'} \subseteq I_1' \cup I^1_2 \cup\ldots\cup I^1_{i'}, \quad I'_1(m)=I^0_1(m) < I^1_2(m)=I^0_2(m)< \ldots < I^1_{i'}(m)=I^0_{i'}(m)
$$
and $I'_1=\min\{I^0_1,\ldots,I^0_{i'}\}$ is without overlap below each of $I^1_2,\ldots,I^1_{i'}$.

The second step acts on $I^1_2,\ldots,I^1_{i'}$ as the first one did on $I^0_1,\ldots,I^0_{i'}$. It gives $I'_2$ and $I^2_3,\ldots,I^2_{i'}$ such that 
$$
I^1_2\cup\ldots\cup I^1_{i'} \subseteq I_2' \cup I^2_3 \cup\ldots\cup I^2_{i'}, \quad I'_2(m)=I^1_2(m) < I^2_3(m)=I^1_3(m)< \ldots < I^2_{i'}(m)=I^1_{i'}(m)
$$
and $I'_2=\min\{I^1_2,\ldots,I^1_{i'}\}$ is without overlap below each of $I^2_3,\ldots,I^2_{i'}$ as well as above $I'_1$.

Execution of $i'$ such steps produces the required non-overlapping polyominoes $I'_1,\ldots,I'_{i'}$.
\end{proof}

\begin{lem}\label{lem:10}
Let $n,i,d \in \mathbb{N}$ be such that $i+d < n$. Then $m(n,i,d) < \infty$.
\end{lem}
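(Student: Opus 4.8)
The plan is to establish finiteness by a straightforward \emph{cell-counting} argument: the rectangle $R_{m \times n}$ consists of $mn$ lattice cells, while each single monotonous polyomino can cover only boundedly many of them, in fact at most $m+n-1$. Since a covering must account for every one of the $mn$ cells, the $i+d$ tiles together cover at most $(i+d)(m+n-1)$ cells, so any $(i,d)$-covering forces the inequality $mn \le (i+d)(m+n-1)$. As soon as $i+d<n$, this bounds $m$ from above, which is exactly the assertion.

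To make the per-tile bound precise, I would first observe that any tile $P$ spans some columns $C_r,\ldots,C_s$ and is thus described by a function on $\{r-1,\ldots,s\}$. Using the column description of $P$, the number of its cells in column $C_t$ equals $|P(t)-P(t-1)|+1$. Summing over $t=r,\ldots,s$ and exploiting monotonicity, the differences telescope, so the total cell count is
\[
\sum_{t=r}^{s}\bigl(|P(t)-P(t-1)|+1\bigr)=(s-r+1)+|P(s)-P(r-1)|.
\]
Here $s-r+1\le m$ is the number of columns met, and $|P(s)-P(r-1)|\le n-1$ because all values lie in $\{1,\ldots,n\}$. Hence every increasing or decreasing tile covers at most $m+n-1$ cells, uniformly in its type and position.

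It then remains to solve the resulting inequality. Writing $p:=i+d$, the bound $mn\le p(m+n-1)$ rearranges to $m(n-p)\le p(n-1)$. Since the hypothesis $i+d<n$ gives $n-p\ge 1>0$, I may divide to obtain $m\le \frac{p(n-1)}{n-p}$, a finite quantity independent of $m$. Consequently $m(n,i,d)\le \frac{p(n-1)}{n-p}<\infty$. In the degenerate case $p=0$ one has $n\ge 1$, so no tile can cover any cell and only $m=0$ is possible, giving $m(n,0,0)=0$, again in agreement with the bound.

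The only genuinely substantive step is the uniform per-tile estimate: the telescoping count showing $|P|\le m+n-1$; once that is in hand the remainder is elementary algebra, and I expect no real obstacle beyond checking that the count holds verbatim for decreasing as well as increasing tiles. I would stress that this counting bound is far weaker than the exact value $i+d+\left\lfloor \frac{id}{n-(i+d)}\right\rfloor$ asserted in Proposition~\ref{prop}, but it is entirely sufficient for the finiteness claimed here, which is all that is needed to launch the sharper analysis.
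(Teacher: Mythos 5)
Your proposal is correct and is essentially the paper's own proof: the same cell-counting argument, the same per-tile bound of $m+n-1$ cells (which the paper asserts without the telescoping justification you supply), and the same rearrangement of $mn \le (i+d)(m+n-1)$ to $m \le \frac{(i+d)(n-1)}{n-(i+d)}$. Your extra details --- the telescoping verification of the per-tile bound and the degenerate case $i+d=0$ --- are harmless additions to an otherwise identical argument.
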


\begin{proof}
Consider an $(i,d)$-covering of $R_{m \times n}$. Since each tile  covers at most $m+n-1$ cells of $R_{m \times n}$ and since the $i+d$ tiles cover $R_{m \times n}$ completely, we have
$mn \le (i+d)(m+n-1)$. This yields $m \le \frac{(i+d)(n-1)}{n-(i+d)}$. Thus, $m(n,i,d)\le \frac{(i+d)(n-1)}{n-(i+d)}<\infty$. 
\end{proof}

\begin{lem}\label{lem:11}
Let $n,i,d \in \mathbb{N}$ be such that $i+d < n$ and let $m=m(n,i,d)$. If $I$ is an increasing and $D$ is a decreasing polyomino of an $(i,d)$-covering of $R_{m \times n}$ such that both $I$ and $D$ are fully defined on $\{0,\ldots,m\}$, then 
$$
I(0) < D(0) \quad\text{and}\quad I(m) > D(m).
$$
\end{lem}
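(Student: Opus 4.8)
My plan is to prove the two inequalities in turn, after first observing that they are equivalent under the left--right reflection of the rectangle. Reflecting $R_{m\times n}$ across the vertical line $x=m/2$ sends column $C_k$ to $C_{m+1-k}$, turns every increasing tile $P$ (fully defined on $\{0,\ldots,m\}$) into the decreasing tile $P^{\rho}(k)=P(m-k)$ and vice versa, and carries an $(i,d)$-covering into a $(d,i)$-covering of the same rectangle. Hence $m(n,d,i)=m(n,i,d)$, so the reflected covering is again extremal of width $m$. Applying the first inequality ``$(\text{incr})(0)<(\text{decr})(0)$'' to the reflected pair $(D^{\rho},I^{\rho})$ yields $D^{\rho}(0)<I^{\rho}(0)$, i.e. $D(m)<I(m)$. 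Thus it suffices to establish $I(0)<D(0)$ for every fully defined increasing $I$ and decreasing $D$ occurring in an extremal $(i,d)$-covering, and the companion inequality $I(m)>D(m)$ then follows automatically.

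To prove $I(0)<D(0)$ I would argue by contradiction, assuming $I(0)\ge D(0)$. Since $I$ is increasing and $D$ is decreasing, this forces $I(k)\ge I(0)\ge D(0)\ge D(k)$ for every $k\in\{0,\ldots,m\}$; that is, $I$ lies weakly above $D$ throughout, so this particular pair does \emph{not} cross. The heuristic is that a non-crossing increasing/decreasing pair is wasteful: an extremal covering should use each such pair to span a full diagonal, and a pair that stays nested leaves slack to enlarge the rectangle. Concretely, I would use the given covering of $R_{m\times n}$ to manufacture an $(i,d)$-covering of $R_{(m+1)\times n}$, which contradicts the defining maximality $m=m(n,i,d)=\sup\{\ldots\}$. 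The construction re-routes $I$ and $D$ so that they now cross and, together with a rearrangement of the remaining $i+d-2$ tiles, absorb one extra column, while the ordered, non-overlapping normal form supplied by Corollary~\ref{cor:9} (and its decreasing analogue) keeps the bookkeeping tractable.

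The main obstacle is precisely this widening step. A naive ``insert a duplicated column'' does not work in general: in the inserted column every tile would contribute only a single fold-cell, and $i+d-2$ single cells plus two intervals cannot cover all $n$ rows once $i+d\ge 4$. So the surgery must be global --- re-threading the $i+d$ tiles through the new column rather than folding them all at one place --- and the delicate part is verifying that the rebuilt family is still monotone, still consists of exactly $i$ increasing and $d$ decreasing tiles, and genuinely covers $R_{(m+1)\times n}$. Once the contradiction is secured the payoff is clean: $I(0)<D(0)$ together with $I(m)>D(m)$ forces each of the $id$ increasing/decreasing pairs to cross and hence to share a cell, and since distinct pairs meet in distinct cells (after normalisation each cell lies in at most one tile of each class), the increasing and decreasing parts overlap in at least $id$ cells --- the estimate an inclusion--exclusion count needs to sharpen Lemma~\ref{lem:10} into the upper bound of Proposition~\ref{prop}.
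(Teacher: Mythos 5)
Your framework is the right one---assume $I(0)\ge D(0)$ and contradict the maximality of $m=m(n,i,d)$ by producing an $(i,d)$-covering of an $((m+1)\times n)$-rectangle, with the second inequality following by reflection---but the proof stops exactly where the work is. You declare the widening step an unresolved obstacle (``the surgery must be global \dots\ the delicate part is verifying \dots'') and never carry it out, so what you have is a plan plus a heuristic, not a proof. Moreover, the global re-threading you envision, involving a rearrangement of all $i+d$ tiles and the normal form of Corollary~\ref{cor:9}, is a red herring.

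The missing observation is that the new column should be \emph{appended at the boundary}, not inserted in the interior. Adjoin $C_0=[-1,0]\times[0,n]$ on the left and extend \emph{only} the two tiles in question, setting $I(-1):=1$ and $D(-1):=n$. Both extensions remain monotone, and in the new column $I$ covers the cells $S(0,1),\ldots,S(0,I(0))$ while $D$ covers $S(0,D(0)),\ldots,S(0,n)$---genuine intervals, not single ``fold-cells'', because at the left edge the value $I(-1)$ may be chosen freely below $I(0)$ and $D(-1)$ freely above $D(0)$. The assumed inequality $I(0)\ge D(0)$ makes these two intervals cover all of $\{1,\ldots,n\}$, so $C_0\cup R_{m\times n}$ is an $((m+1)\times n)$-rectangle covered by the same $i$ increasing and $d$ decreasing tiles, contradicting $m=m(n,i,d)$, which is finite by Lemma~\ref{lem:10}. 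Your objection that ``$i+d-2$ single cells plus two intervals cannot cover all $n$ rows'' is correct for an interior duplicated column but beside the point: at the boundary no tile other than $I$ and $D$ needs to meet the new column at all, since tiles are not required to span every column. This one-line construction is precisely the paper's proof; your reflection reduction of $I(m)>D(m)$ to $I(0)<D(0)$ is sound and matches the paper's ``analogously'', and your closing paragraph about the $id$ crossings concerns the later use of the lemma in Proposition~\ref{prop}, not its proof.
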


\begin{proof}
Assume that $I(0) \ge D(0)$. Then $I$ and $D$ can be extended to $\{-1,0,\ldots,m\}$ by putting $I(-1):=1$ and $D(-1):=n$. These extensions cover the column $C_{0}:=[-1,0] \times [0,n]$. Thus there exists an $(i,d)$-covering of the $((m+1) \times n)$-rectangle $C_0 \cup R_{m \times n}$, contradicting $m=m(n,i,d)$. 

The second inequality is shown analogously.
\end{proof}

\begin{proof}[Proof of Proposition~\ref{prop}, upper estimate]
We can suppose $i+d < n$, since otherwise the upper estimate is $\infty$. Let $m=m(n,i,d)$; cf.\ Lemma~\ref{lem:10}. By Corollary~\ref{cor:9}, there exists an $(i',d')$-covering of $R_{m \times n}$ by $0 \le i' \le i$ increasing tiles $I_1, \ldots,I_{i'}:\{0,\ldots,m\}\to\{1,\ldots,n\}$ that are mutually non-overlapping and $0 \le d' \le d$ decreasing tiles $D_1,\ldots,D_{d'}:\{0,\ldots,m\}\to\{1,\ldots,n\}$ that are mutually non-overlapping. 

We denote the number of lattice cells that constitute a tile $I$ by $\# I$. Using the monotonicity of the tiles, we get
$$
\# I_j= m+I_j(m)-I_j(0),\; 1 \le j \le i'.
$$
Since $I_1,\ldots,I_{i'}$ do not overlap, we have, w.l.o.g.,
$$
1 \le I_1(0) < I_2(0) < \ldots < I_{i'}(0) \quad\text{and}\quad n \ge I_{i'}(m) > I_{i'-1}(m) > \ldots > I_{1}(m).
$$
Then the total number of cells given by the increasing tiles is
\begin{align*}
\sum_{j=1}^{i'} \# I_j &= i'm+\big(I_{i'}(m)+I_{i'-1}(m)+\ldots+I_1(m)\big)-\big(I_1(0)+I_2(0)+\ldots+I_{i'}(0)\big)\\ 
&\le i'm+\big(n+(n-1)+\ldots+(n-i'+1)\big)-(1+2+\ldots+i')\\
&=i'm+\big((n-i')+((n-1)-(i'-1))+\ldots+((n-i'+1)-1)\big)\\
&=i'm+i'(n-i')\\
&=i'(m+n-i').
\end{align*}
Similarly, the total number of cells of the decreasing tiles is
$$
\sum_{k=1}^{d'} \# D_k \le d'(m+n-d').
$$

If one of the $mn$ cells from $R_{m \times n}$ is covered by more than one of the given tiles, then it is covered by exactly one increasing and exactly one decreasing tile, since neither two increasing tiles nor two decreasing tiles overlap. By Lemma~\ref{lem:11}, every increasing tile crosses every decreasing tile in at least one cell. There are at least $i
'd'$ such crossings. Thus, the total number of squares of tiles, counted with multiplicities, satisfies
$$
mn+i'd' \le \sum_{j=1}^{i'} \# I_j + \sum_{k=1}^{d'} \# D_k \le 
i'(m+n-i')+d'(m+n-d').
$$
This implies
$$
m \le i'+d'+\frac{i'd'}{n-(i'+d')} \le i+d+\frac{id}{n-(i+d)},
$$ 
and the upper estimate of Proposition~\ref{prop} is verified.
\end{proof}

%%%%%%%%%%%

\subsection{Proof of Proposition~\ref{prop}: lower estimate}

The proof of the lower estimate rests mainly on an inductive construction based on two estimates from Lemma~\ref{lem:tilde_m}, who are obtained by extending coverings from smaller rectangles to larger ones. These extensions are possible, because the tiles of an $(i,d)$-covering of $R_{m \times n}$ can be assumed to have a particular beginning in the left-most column $C_1$, as is shown in the following.

\begin{lem}\label{lem:12}
Let $m,n,i,d \in \mathbb{N}$ be such that $m,n \ge 1$ and $n\ge i+d$. If $R_{m \times n}$ has an $(i,d)$-covering, then $R_{m \times n}$ has an $(i,d)$-covering consisting of increasing tiles $I_1,\ldots,I_i: \{0,\ldots,m\}\to\{1,\ldots,n\}$ and decreasing tiles $D_1,\ldots,D_d: \{0,\ldots,m\}\to\{1,\ldots,n\}$ such that
$$
I_j(0)=j, \; 1 \le j \le i, \quad\text{and}\quad D_k(0)=n+1-k,\; 1 \le k \le d.
$$
\end{lem}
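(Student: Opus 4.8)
The goal is to show that any $(i,d)$-covering of $R_{m \times n}$ can be replaced by one whose tiles have the prescribed starting values in the leftmost column: the increasing tiles begin at heights $1,2,\ldots,i$ from the bottom, and the decreasing tiles begin at heights $n,n-1,\ldots,n+1-d$ from the top. The plan is to first normalize the increasing tiles and the decreasing tiles separately, and then argue that the two normalizations are compatible, i.e.\ that after fixing the increasing tiles' starts to occupy the bottom $i$ cells of $C_1$ and the decreasing tiles' starts to occupy the top $d$ cells of $C_1$, these $i+d \le n$ cells can be chosen disjoint.

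First I would invoke the machinery already developed for increasing tiles. Starting from the given $(i,d)$-covering, Corollary~\ref{cor:9} lets me replace the $i$ increasing tiles by $i' \le i$ non-overlapping increasing tiles, all fully defined on $\{0,\ldots,m\}$, whose union still contains the original union; by symmetry (reflecting $R_{m \times n}$ vertically, which swaps increasing and decreasing), the same applies to the decreasing tiles. If the covering now uses fewer tiles than $i+d$, I can pad with extra copies (e.g.\ constant strips) to restore exactly $i$ increasing and $d$ decreasing tiles without destroying the covering property, since adding tiles only enlarges the covered region. After this step the increasing tiles are mutually non-overlapping and, as in the upper-estimate proof, can be ordered so that their starting values satisfy $I_1(0) < I_2(0) < \cdots < I_i(0)$; likewise $D_1(0) > D_2(0) > \cdots > D_d(0)$.

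The heart of the argument is then a \emph{lowering} operation on the starting column. I want to push each increasing tile $I_j$ down so that $I_j(0)=j$, and each decreasing tile $D_k$ up so that $D_k(0)=n+1-k$. The natural device is to redefine each $I_j$ on an initial segment of columns by lowering its profile to the smallest admissible position consistent with monotonicity and with non-overlap among the increasing tiles; concretely, one replaces $I_j(0)$ by $j$ and, moving rightward, takes $I_j(k) := \max\{I_j(k),\, I_j(k-1)\}$ after the reset, stopping to reconnect to the original tile once the lowered profile meets it. Because the $I_j$ were already vertically ordered and non-overlapping, lowering the whole stack uniformly to heights $1,\ldots,i$ preserves the ordering and the non-overlap, and it only \emph{enlarges} each tile downward, so the union still covers everything the old increasing tiles did. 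The symmetric raising operation handles the decreasing tiles. The main obstacle I anticipate is the interface between the two families: I must verify that occupying the bottom $i$ cells of $C_1$ with increasing starts and the top $d$ cells with decreasing starts does not force a collision, and more subtly that the enlarged tiles still jointly cover $R_{m \times n}$. The first point is exactly the hypothesis $i+d \le n$, which guarantees $i < n+1-d$ so the two blocks of starting cells in $C_1$ are disjoint. For the covering property, the key observation is that lowering the increasing tiles and raising the decreasing tiles can only add cells near column $C_1$ and never removes a cell elsewhere, so any cell previously covered remains covered; I would check this columnwise, confirming that the modification in each column $C_k$ is an interval that contains the original interval, exactly as in the columnwise bookkeeping of Lemma~\ref{lem:8}.
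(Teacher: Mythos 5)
Your proposal follows essentially the same route as the paper: apply Corollary~\ref{cor:9} (and its reflection for the decreasing family) to obtain non-overlapping tiles fully defined on $\{0,\ldots,m\}$ with strictly ordered starts, so that $I'_j(0)\ge j$ and $D'_k(0)\le n+1-k$, then reset \emph{only} the first-column values to $j$ and $n+1-k$ (your ``moving rightward, take the max'' step is vacuous, since the untouched values are already $\ge$ the lowered start), which enlarges each tile while preserving monotonicity, and finally pad with constant tiles to restore exactly $i$ and $d$. Two minor corrections that do not affect validity: your claim that the lowering ``preserves the non-overlap'' is false in general (the lowered $I_{j+1}$ may now meet $I_j$ in column $C_1$), but this is immaterial because the lemma asserts only a covering, not disjointness — which also makes your worry about the interface between the two families unnecessary beyond checking that the prescribed values lie in $\{1,\ldots,n\}$; and the padding is cleaner done at the end with the constants $I_j\equiv j$ for $i'<j\le i$ and $D_k\equiv n+1-k$ for $d'<k\le d$, as padding with ``copies'' before the ordering step would break the strict ordering of the starts that your argument relies on.
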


\begin{proof}
By Corollary~\ref{cor:9}, $R_{m \times n}$ has a covering by $i'\le i$ non-overlapping increasing tiles $I'_1,\ldots,I'_{i'}$ and $d' \le d$ non-overlapping decreasing tiles $D'_1,\ldots,D'_{d'}$. W.l.o.g.,
$1 \le I'_1(0) < \ldots < I'_{i'}(0)$ and $n \ge D'_1(0) > \ldots > D'_{d'}(0)$, whence
$$
I'_j(0) \ge j, \; 1 \le j \le i', \quad\text{and}\quad D'_k(0) \le n+1-k, \; 1 \le k \le d'.
$$
We define $I_j(l):=I'_j(l)$, $1 \le l \le m$, and $I_j(0):=j$, this way defining an increasing tile $I_j \supseteq I'_j$. Similarly, we put $D_k(l):=D'_k(l)$, $1 \le l \le m$, and $D_k(0):=n+1-k$, this way defining a decreasing tile $D_k \supseteq D'_k$. So we find a covering of $R_{m \times n}$ by $I_1,\ldots,I_{i'}$ and $D_1,\ldots,D_{d'}$ such that
$$
I_j(0)=j, \; 1 \le j \le i', \quad\text{and}\quad D_k(0)=n+1-k,\; 1 \le k \le d'.
$$
If $i'<i$ we add the constant (to be considered as increasing) tiles $I_j\equiv j$, $i' < j \le i$. If $d'<d$ we add the constant (to be considered as decreasing) tiles $D_k\equiv n+1-k$, $d' < k \le d$.
\end{proof}

In the following lemma we use the quantity
$$
\tilde{m}(e,i,d):=m(e+i+d,i,d).
$$
That is, $\tilde{m}(e,i,d)$ is the maximal length $m$ of a rectangle $R_{m \times n}$ that can be covered by $i$ increasing and $d$ decreasing polyominoes and whose height $n=e+i+d$ exceeds $i+d$ by $e$.

\begin{lem}\label{lem:tilde_m}
For all $e,i,d \in \mathbb{N}$,
\begin{equation}\label{eq:tilde_m>}
\tilde{m}(e,i,d) \ge \left\{
\begin{array}{ll}
d+e+\tilde{m}(e,i-e,d)& \text{if } i \ge e,\\
i+e+\tilde{m}(e,i,d-e)& \text{if } d \ge e.
\end{array}
\right.
\end{equation}
\end{lem}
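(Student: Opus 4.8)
The plan is to prove the first inequality (the case $i \ge e$) by an explicit extension construction, and to deduce the second (the case $d \ge e$) from it by symmetry. The symmetry is the reflection of $R_{m \times n}$ across a vertical line: it sends the cell $S(k,l)$ to $S(m+1-k,l)$ and turns every increasing polyomino into a decreasing one and vice versa, so an $(i,d)$-covering becomes a $(d,i)$-covering and $m(n,i,d)=m(n,d,i)$, i.e.\ $\tilde m(e,i,d)=\tilde m(e,d,i)$. Applying the first inequality to the triple $(e,d,i)$ (legitimate because $d \ge e$) gives $\tilde m(e,d,i) \ge i+e+\tilde m(e,d-e,i)$, and rewriting both occurrences of $\tilde m$ via the symmetry yields exactly $\tilde m(e,i,d) \ge i+e+\tilde m(e,i,d-e)$. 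So from here I treat only $i \ge e$, and I may assume $e \ge 1$, since for $e=0$ every relevant $\tilde m$ equals $\infty$ and the inequality is trivial.

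Fix $i \ge e \ge 1$, put $m':=\tilde m(e,i-e,d)$ (finite by Lemma~\ref{lem:10}), $N:=i+d+e$ and $M:=m'+d+e$. By Lemma~\ref{lem:12} there is a covering of $R_{m' \times (i+d)}$ by increasing tiles $J_1,\dots,J_{i-e}$ with $J_a(0)=a$ and decreasing tiles $E_1,\dots,E_d$ with $E_b(0)=i+d+1-b$. I would build a covering of $R_{M \times N}$ by $i$ increasing and $d$ decreasing tiles as follows. First embed the old covering in the $m'$ right-most columns, bottom-aligned, by reading each old tile on the shifted domain $\{d+e,\dots,M\}$; thus the old rectangle occupies rows $1,\dots,i+d$ over $C_{d+e+1},\dots,C_M$, leaving the left strip and the top band $[d+e,M]\times[i+d,N]$ to be filled. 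Then I define three families of tiles on $\{0,\dots,M\}$: (i) the $i-e$ increasing tiles $I_a$ ($1\le a\le i-e$) that are constant equal to $a$ on $\{0,\dots,d+e\}$ and then agree with $J_a$; (ii) the $d$ decreasing tiles $D_b$ ($1\le b\le d$) that agree with $E_b$ over the old columns and, on $\{0,\dots,d+e\}$, descend from $D_b(0)=N+1-b$ to $D_b(d+e)=E_b(0)=i+d+1-b$ (a total drop of $e$); and (iii) $e$ new increasing tiles $I_{i-e+1},\dots,I_i$, where $I_{i-e+c}$ climbs from $I_{i-e+c}(0)=i-e+c$ to $I_{i-e+c}(d+e)=i+d+c$ and is then constant equal to $i+d+c$ over the old columns.

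This uses exactly $(i-e)+e=i$ increasing and $d$ decreasing tiles, all monotone and contained in $R_{M \times N}$, and it reaches the required extremal dimensions. The top band is covered because the $e$ new tiles are constant at the heights $i+d+1,\dots,i+d+e=N$ over the old columns, and the bottom $i-e$ rows of the strip are covered by the constant tiles $I_1,\dots,I_{i-e}$; over the old columns the embedded tiles do the rest. The real work — and the step I expect to be the main obstacle — is to verify, column by column inside the left strip, that the $e$ upward-sloping new tiles and the $d$ downward-sloping tiles $D_b$ interlock so as to leave no hole in the rows from $i-e+1$ up to $N$ (the $e$ rows $i+1,\dots,i+e$ at the left edge are no tile's starting height and must be swept in), and that everything matches the embedded old covering at the seam $C_{d+e+1}$. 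Low-dimensional experiments show this is genuinely delicate: a careless choice of the individual lattice paths (for instance, letting a new increasing tile climb one step too early, or choosing a $D_b$-profile that descends too gently) leaves a single cell uncovered. So the construction must pin down the exact profiles of the $D_b$ and the $I_{i-e+c}$ across the $d+e$ strip-columns and check each of the $N$ rows there. Once this local coverage verification is carried out, the covering witnesses that $R_{M \times N}$ is coverable by $i$ increasing and $d$ decreasing tiles, i.e.\ $\tilde m(e,i,d)=m(N,i,d)\ge M=m'+d+e=d+e+\tilde m(e,i-e,d)$, as required.
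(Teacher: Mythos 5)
Your skeleton is exactly the paper's own: the paper proves the case $i \ge e$ by the very construction you describe (its Figure~\ref{fig:3}) --- a left strip of $d+e$ columns in which the $d$ decreasing tiles descend $e$ units and $e$ of the increasing tiles climb $d+e$ units into the top band, glued along the seam to an $(i-e,d)$-covering of the residual $\big(\tilde{m}(e,i-e,d) \times (i+d)\big)$-rectangle normalized by Lemma~\ref{lem:12} --- and it, too, settles the case $d \ge e$ by the reflection symmetry interchanging increasing and decreasing tiles, which you spell out correctly. But you stop precisely where the content of the lemma lies: you never choose the monotone profiles of the $D_b$ and of the $e$ climbers inside the strip, and you yourself flag the column-by-column coverage of the strip as the ``main obstacle,'' to be ``carried out'' later. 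So as written this is an announced construction together with an acknowledged, unverified covering claim, not a proof; the paper closes exactly this gap by pinning down the profiles in Figure~\ref{fig:3}.

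The missing step is filled as follows (this is what the figure encodes; set $N=i+d+e$ and relabel your climbers by $c \mapsto e+1-c$, so climber $c$ runs from row $i+1-c$ to row $N+1-c$). Stagger the drops and climbs, one per column: the decreasing tile starting at row $N+1-b$ makes its entire drop of $e$ units in column $d+1-b$ (lowest starter drops leftmost), and the climber ending at row $N+1-c$ makes its entire climb of $d+e$ units in column $d+c$ (the climber ending highest climbs first). Then in a column $k \in \{1,\dots,d\}$ the already-dropped tiles cover rows $i+1,\dots,i+k-1$, the tile dropping there covers rows $i+k,\dots,i+e+k$, and the still-high ones cover rows $i+e+k+1,\dots,N$; in column $d+c$ the not-yet-climbed climbers cover rows $i-e+1,\dots,i-c$, the climbing one covers rows $i+1-c,\dots,N+1-c$, and the finished ones cover rows $N+2-c,\dots,N$. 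Together with your constant tiles $I_1,\dots,I_{i-e}$ on rows $1,\dots,i-e$, every strip column is completely covered, and the seam values match Lemma~\ref{lem:12} as you arranged. Note that in columns $1,\dots,d$ the count is exactly tight: $(k-1)+(e+1)+(d-k)=d+e$ cells for the $d+e$ rows above row $i$, so there is no slack --- which confirms your own observation that a gentler descent profile leaves a hole, and shows the verification is genuinely load-bearing rather than routine.
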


\begin{proof}
The claim is obvious for $e=0$, because $\tilde{m}(0,i,d)=m(i+d,i,d)=\infty$. 

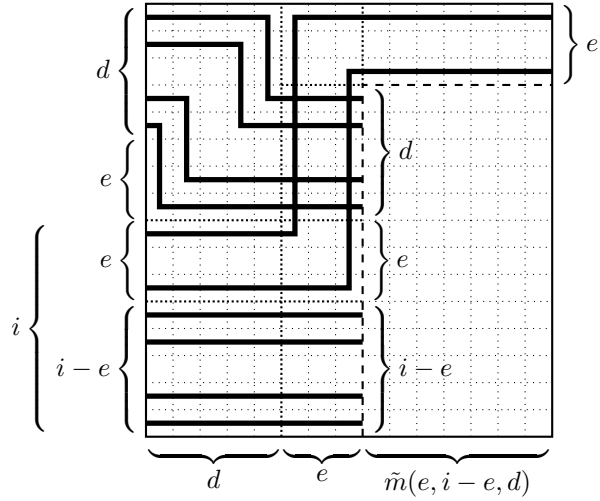
\begin{figure}
\begin{center}
\begin{tikzpicture}[xscale=.36,yscale=.36]

\draw[thick]
	(0,0)--(0,16)--(15,16)--(15,0)--cycle
	(-.5,13.5) node[left] {$d\left\{\begin{array}{c} \vspace{35pt} \end{array}\right.\hspace{-17pt}$}
	(-.5,9.5) node[left] {$e\left\{\begin{array}{c} \vspace{17pt} \end{array}\right.\hspace{-17pt}$}
	(-.5,6.5) node[left] {$e\left\{\begin{array}{c} \vspace{17pt} \end{array}\right.\hspace{-17pt}$}
	(-.5,2.5) node[left] {$i-e\left\{\begin{array}{c} \vspace{35pt} \end{array}\right.\hspace{-17pt}$}
	(-3.8,4) node[left] {$i\left\{\begin{array}{c} \vspace{68pt} \end{array}\right.\hspace{-17pt}$}
	(8.5,10.5) node[right] {$\hspace{-17pt}\left.\begin{array}{c} \vspace{35pt} \end{array}\right\}d$}
	(8.5,6.5) node[right] {$\hspace{-17pt}\left.\begin{array}{c} \vspace{17pt} \end{array}\right\}e$}
	(8.5,2.5) node[right] {$\hspace{-17pt}\left.\begin{array}{c} \vspace{35pt} \end{array}\right\}i-e$}
	(15.5,14.5) node[right] {$\hspace{-17pt}\left.\begin{array}{c} \vspace{17pt} \end{array}\right\}e$}
	(2.5,0) node[below] {$\underbrace{\hspace{49pt}}_{\textstyle d}$}
	(6.5,0) node[below] {$\underbrace{\hspace{29pt}}_{\textstyle e}$}
	(11.5,0) node[below] {$\underbrace{\hspace{69pt}}_{\textstyle\tilde{m}(e,i-e,d)}$}
	;

\draw[dashed,thick]
  (8,0)--(8,13)--(15,13)
  ;

\draw[dotted]
  (0,1)--(15,1)
  (0,2)--(15,2)
  (0,3)--(15,3)
  (0,4)--(15,4)
  (8,5)--(15,5)
  (0,6)--(15,6)
  (0,7)--(15,7)
  (8,8)--(15,8)
  (0,9)--(15,9)
  (0,10)--(15,10)
  (0,11)--(15,11)
  (0,12)--(15,12)
  (0,13)--(5,13)
  (0,14)--(15,14)
  (0,15)--(15,15)
  (1,0)--(1,16)
  (2,0)--(2,16)
  (3,0)--(3,16)
  (4,0)--(4,16)
  (6,0)--(6,16)
  (7,0)--(7,16)
  (9,0)--(9,16)
  (10,0)--(10,16)
  (11,0)--(11,16)
  (12,0)--(12,16)
  (13,0)--(13,16)
  (14,0)--(14,16)
  ;

\draw[densely dotted, thick]
  (5,0)--(5,16)
	(8,13)--(8,16)
	(0,5)--(8,5)
	(0,8)--(8,8)
	(5,13)--(8,13)
	;
	
\draw[line width = .7mm] 
  (0,15.5)--(4.5,15.5)--(4.5,12.5)--(8,12.5)
	(0,14.5)--(3.5,14.5)--(3.5,11.5)--(8,11.5)
  (0,12.5)--(1.5,12.5)--(1.5,9.5)--(8,9.5)
	(0,11.5)--(.5,11.5)--(.5,8.5)--(8,8.5)
	(0,7.5)--(5.5,7.5)--(5.5,15.5)--(15,15.5)
	(0,5.5)--(7.5,5.5)--(7.5,13.5)--(15,13.5)
	(0,4.5)--(8,4.5)
	(0,3.5)--(8,3.5)
	(0,1.5)--(8,1.5)
	(0,.5)--(8,.5)
  ;

\end{tikzpicture}
\end{center}
\caption{The first estimate in Lemma~\ref{lem:tilde_m}\label{fig:3}}
\end{figure}
For $e \ge 1$, it is shown in Figure~\ref{fig:3} that a $\big((d+e+\tilde{m}(e,i-e,d)) \times (e+i+d)\big)$-rectangle has an $(i,d)$-covering, provided that $i \ge e$: The first $d$ columns are used to decrease the vertical level of the $d$ decreasing tiles by $e$ units. In the next $e$ columns the upper $e$ increasing tiles are lifted by $d+e$ units. In the last $\tilde{m}(e,i-e,d)$ columns the last mentioned $e$ increasing tiles keep their vertical levels constant. The remaining $i-e$ increasing and $d$ decreasing tiles are arranged such that they can be continued by an $(i-e,d)$-covering of the dashed residual $\big(\tilde{m}(e,i-e,d) \times (i+d)\big)$-rectangle, which exists by the definition of $\tilde{m}(e,i-e,d)$ and by Lemma~\ref{lem:12}. 

So the upper inequality of \eqref{eq:tilde_m>} is verified. The lower one follows by the symmetry in $i$ and $d$.
\end{proof}

\begin{proof}[Proof of Proposition~\ref{prop}, lower estimate]
We have to show that
\begin{equation}\label{eq:goal}
\tilde{m}(e,i,d)\ge i+d+\left\lfloor \frac{id}{e} \right\rfloor
\end{equation}
for all $e,i,d \in \mathbb{N}$. We proceed by induction on $e$.

The base case with $e=0$ is trivial: $\tilde{m}(0,i,d)=m(i+d,i,d)=\infty$. In order to consider a less degenerate situation, we note that the claim $\tilde{m}(1,i,d)\ge i+d+id$ for $e=1$ is obtained by $i$-fold application of the first line of \eqref{eq:tilde_m>}, then $d$-fold application of the second line \eqref{eq:tilde_m>} and the final observation that $\tilde{m}(1,0,0)=0$.

We come to the induction step (with $e \ge 1$): Put $i=\alpha e+\iota$, $d=\beta e+\delta$ with integers $\alpha,\beta,\iota,\delta$ and $0 \le \iota,\delta < e$. 
First we show that 
\begin{equation}\label{eq:ide}
\tilde{m}\left(\left\lfloor\frac{\iota \delta}{e}\right\rfloor,\iota,\delta\right) \ge \iota+\delta+e.
\end{equation}
Indeed, this is trivial if $\iota\delta=0$, since then $\tilde{m}(0,\iota,\delta)=m(i+d,i,d)=\infty$. If $\iota\delta >0$, we have $\left\lfloor\frac{\iota \delta}{e}\right\rfloor \le \frac{\iota}{e}\delta < 1e=e$, and the induction hypothesis yields
$$
\tilde{m}\left(\left\lfloor\frac{\iota \delta}{e}\right\rfloor,\iota,\delta\right) \ge \iota+\delta+\left\lfloor\frac{\iota \delta}{\left\lfloor\frac{\iota \delta}{e}\right\rfloor}\right\rfloor \ge \iota+\delta+e.
$$ 
So \eqref{eq:ide} is shown.

Note that \eqref{eq:ide} says that a $\left((\iota+\delta+e) \times \left(\iota+\delta+\left\lfloor \frac{\iota\delta}{e} \right\rfloor\right)\right)$-rectangle has a $(\iota,\delta)$-covering. By reflecting w.r.t.\ the straight line $y=-x$ we get a $(\iota,\delta)$-covering of a $\left(\left(\iota+\delta+\left\lfloor \frac{\iota\delta}{e} \right\rfloor\right) \times (\iota+\delta+e)\right)$-rectangle. Consequently, 
\begin{equation}\label{eq:ide2}
\tilde{m}(e,\iota,\delta) \ge \iota+\delta+\left\lfloor \frac{\iota\delta}{e} \right\rfloor.
\end{equation}

Now we estimate
\begin{eqnarray*}
\tilde{m}(e,i,d)&=&\tilde{m}(e,\iota +\alpha e, d)\\
& \stackrel{\eqref{eq:tilde_m>}}{\ge}&
\alpha(d+e)+\tilde{m}(e,\iota,d)\\
&=&\alpha(d+e)+\tilde{m}(e,\iota,\delta+\beta e)\\
& \stackrel{\eqref{eq:tilde_m>}}{\ge}&
\alpha((\beta e +\delta)+e)+\big(\beta(\iota+e)+\tilde{m}(e,\iota,\delta)\big)\\
& \stackrel{\eqref{eq:ide2}}{\ge}&
\alpha((\beta e +\delta)+e)+\beta(\iota+e)+\left(\iota+\delta+\left\lfloor \frac{\iota\delta}{e} \right\rfloor\right)\\
&=& (\alpha e + \iota)+(\beta e + \delta)+  \left\lfloor \alpha\beta e+ \alpha\delta+\beta\iota + \frac{\iota\delta}{e} \right\rfloor\\
&=& i+d+ \left\lfloor \frac{id}{e} \right\rfloor,
\end{eqnarray*}
and \eqref{eq:goal} is verified.
\end{proof}

%%%%%%%%%%%

\subsection{Proof of Theorem~\ref{thm:main}}

\begin{lem}\label{lem:parity}
If $r \in \mathbb{N} \setminus \{0\}$ is odd and $s \in \mathbb{N}\setminus \{0\}$ is even, then $\displaystyle\left\lfloor \frac{r-1}{s} \right\rfloor=\left\lfloor \frac{r}{s} \right\rfloor$.
\end{lem}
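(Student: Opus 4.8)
The plan is to reduce everything to division with remainder and then exploit the parity mismatch between $r$ and $s$. First I would write $r = qs + t$, where $q := \lfloor r/s \rfloor \in \mathbb{N}$ is the quotient and $t \in \{0,1,\ldots,s-1\}$ is the remainder, so that by definition $\lfloor r/s \rfloor = q$ and the task becomes to show that $r-1$ has the \emph{same} quotient $q$ when divided by $s$.

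The one substantive observation is that $t \ge 1$. Indeed, since $s$ is even, the multiple $qs$ is even; since $r$ is odd by hypothesis, the difference $t = r - qs$ is odd, and in particular $t \neq 0$, whence $t \ge 1$. This is precisely the step where both hypotheses (``$r$ odd'' and ``$s$ even'') enter, and it is the only place parity is used.

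It then remains to read off $\lfloor (r-1)/s \rfloor$. I would write $r-1 = qs + (t-1)$ and note that $0 \le t-1 \le s-2 < s$, using $1 \le t \le s-1$. Thus $t-1$ is a valid remainder of $r-1$ modulo $s$ attached to the same quotient $q$, so $\lfloor (r-1)/s \rfloor = q = \lfloor r/s \rfloor$, as required.

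Because the whole argument is a short direct computation, there is no genuine obstacle; the only point demanding care is the parity step that forces $t \ge 1$. It is worth stressing that this is exactly what prevents the two floors from differing: without the parity mismatch the equality can fail, for instance whenever $s \mid r$ (where $t=0$ and subtracting $1$ drops the quotient).
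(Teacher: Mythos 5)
Your proof is correct and rests on the same key observation as the paper's: the two floors can differ only if $s \mid r$, which the parity mismatch excludes. The paper phrases this as a proof by contradiction (assuming the floors differ forces $r=st$ with $st$ even), while you run the same idea directly through division with remainder; the difference is purely presentational.
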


\begin{proof}
Assume that $\left\lfloor \frac{r-1}{s} \right\rfloor< \left\lfloor \frac{r}{s} \right\rfloor$. Then the number $t:=\left\lfloor \frac{r}{s} \right\rfloor \in \mathbb{N}$ satisfies $\frac{r-1}{s} < t \le \frac{r}{s}$, whence $r-1 < st \le r$ and in turn $r=st$. But this is incompatible with the parities of $r$ and $s$.
\end{proof}

\begin{proof}[Proof of Theorem~\ref{thm:main}(a)]
Clearly, $m(n,p)= \sup\{m(n,i,d): i,d \in \mathbb{N}, i+d=p\}$. Proposition~\ref{prop} gives
\begin{align*}
m(n,p)&=\sup\left\{p+\left\lfloor \frac{id}{n-p} \right\rfloor: i,d \in \mathbb{N}, i+d=p\right\}\nonumber\\
&=\left\{
\begin{array}{l@{\;}ll}
\displaystyle p+\left\lfloor \frac{\frac{p}{2}\cdot \frac{p}{2}}{n-p} \right\rfloor &\displaystyle = p+\left\lfloor \frac{p^2}{4(n-p)} \right\rfloor& \text{if } p \text{ is even},\\[2ex]
\displaystyle p+\left\lfloor \frac{\frac{p-1}{2}\cdot \frac{p+1}{2}}{n-p} \right\rfloor &\displaystyle =p+\left\lfloor \frac{p^2-1}{4(n-p)} \right\rfloor & \text{if } p \text{ is odd}.
\end{array}
\right.
\end{align*}
Now Lemma~\ref{lem:parity} completes the proof.
\end{proof}

\begin{proof}[Proof of Theorem~\ref{thm:main}(b)]
Clearly, $p(m,n) \le \min\{m,n\}$, since $R_{m \times n}$ can be covered by $\min\{m,n\}$ stripes of width $1$ and length $\max\{m,n\}$. So,
\begin{equation}\label{eq:pf1b_1}
p(m,n)=\min\big\{p \in \{0,1,\ldots,\min\{m,n\}\}: m(n,p) \ge m\big\}.
\end{equation}

We use Theorem~\ref{thm:main}(a). Note that, for all $m,n,p \in \mathbb{N}$ with $p \le \min\{m,n\}$,
$$
m \le m(n,p) \;\;\Leftrightarrow\;\;  m \le p+\left\lfloor \frac{p^2}{4(n-p)} \right\rfloor
\;\;\Leftrightarrow\;\;  m \le p+\frac{p^2}{4(n-p)}
\;\;\Leftrightarrow\;\;  p^2-\frac{4}{3}(m+n)p+\frac{4}{3}mn \le 0.
$$
The last applies if and only if $p$ is between the zeros of the quadratic polynomial; that is,
$$
\frac{2}{3}\left(m+n-\sqrt{m^2+n^2-mn}\right) \le p \le \frac{2}{3}\left(m+n+\sqrt{m^2+n^2-mn}\right).
$$
The second inequality is always satisfied, because $p \le \min\{m,n\}$. Therefore,
$$
m \le m(n,p)
\quad\Leftrightarrow\quad p \ge \left\lceil \frac{2}{3}\left(m+n-\sqrt{m^2+n^2-mn}\right) \right\rceil.
$$
Combining this with \eqref{eq:pf1b_1} yields the claim
\end{proof}

%%%%%%%%%%%%%%%%%%%%%%%%%%%%%%%%%%%%%%%%%%%%%%%%%%%%%%%%%%%%%%%%%%%%%%%%%%%%%%

%%%%%%%%%%%%%%%%%%%%%%%%%%%%%%%%%%%%%%%%%%%%%%%%%%%%%%%%%%%%%%%%%%%%%%%%%%%%%%%

\end{document}